\theoremstyle{plain}
\newtheorem{theorem}{Theorem}
\newtheorem{lemma}{Lemma}
\newtheorem{definition}{Definition}
\newtheorem{corollary}{Corollary}
\theoremstyle{remark}
\newtheorem{remark}{Remark}
\numberwithin{equation}{section}
\begin{document}

\title[]{A topological equivalence result for a family of nonlinear difference systems having generalized exponential dichotomy}
\author[]{\'Alvaro Casta\~neda}
\author[]{Gonzalo Robledo}
\address{Departamento de Matem\'aticas, Facultad de Ciencias, Universidad de
  Chile, Casilla 653, Santiago, Chile}
\email{castaneda@u.uchile.cl,grobledo@u.uchile.cl}
\thanks{The first author was funded by FONDECYT Iniciaci\'on Project 11121122 and by 
the Center of Dynamical Systems and Related Fields DySyRF (Anillo Project 1103, CONICYT). The second author
was funded by FONDECYT Regular Project 1120709}
\subjclass{39A06,39A12}
\keywords{Topological equivalence, Generalized exponential dichotomy, Difference equations}
\date{July 2015}

\begin{abstract}
We obtain sufficient conditions ensuring the topological equivalence of two perturbed difference linear systems 
whose linear part has a property of generalized exponential dichotomy. When the exponential dichotomy is verified,
we obtain a strongly and H\"older topological equivalence. 
\end{abstract}

\maketitle

\section{introduction}
The purpose of this article is to find sufficient conditions ensuring the topological equivalence (see Definition \ref{TopEq} in the next
section) between the difference systems
\begin{equation}
\label{No-lin}
x_{n+1}=A_{n}x_{n}+f(n,x_{n}),
\end{equation}
\begin{equation}
\label{No-lin2}
y_{n+1}=A_{n}y_{n}+g(n,y_{n}),
\end{equation}
where $x_{n}$ and $y_{n}$ are sequences of $d$--dimensional column vectors, $A_{n}\in \mathbb{R}^{d}\times\mathbb{R}^{d}$ and the 
functions $f,g\colon \mathbb{Z}\times \mathbb{R}^{d}\to \mathbb{R}^{d}$ satisfy
\begin{itemize}
\item[\textbf{(A1)}] $A_{n}$ is bounded, nonsingular and
\begin{displaymath}
||A_{n}-I||\leq M \quad \textnormal{for any} \quad n\in \mathbb{Z}, 
\end{displaymath}
where $||\cdot||$ is a matrix norm.
\item[\textbf{(A2)}] The functions $f$ and $g$ are in the set $\mathcal{S}$ defined by
\begin{displaymath}
\mathcal{S}=\Big\{ 
\mathcal{U}\colon \mathbb{Z}\times \mathbb{R}^{d}\to \mathbb{R}^{d}\colon 
|\mathcal{U}(n,x_{1})-\mathcal{U}(n,x_{2})|\leq r_{n}|x_{1}-x_{2}| \quad \textnormal{for any} \quad n\in \mathbb{Z}\Big\},
\end{displaymath}
where $|\cdot|$ is a vector norm and the sequence $r_{n}$ is nonnegative.
\end{itemize}

This problem was initially studied by G. Papaschinopoulos in \cite{Papaschinopoulos}, where the topological equivalence
of (\ref{No-lin}) and (\ref{No-lin2}) was an intermediate technical step in the study of the topological equivalence of some
hybrid systems. In \cite{Papaschinopoulos}, it was assumed that $f$ and $g$ satisfy some smallness assumptions, are Lipschitz
and the linear system
\begin{equation}
\label{Lin} 
z_{n+1}=A_{n}z_{n}
\end{equation}
has a property of $\alpha$--exponential dichotomy (see Definition \ref{ED} in the next section).

In this work, we consider more general assumptions compared with \cite{Papaschinopoulos}. In particular,
we assume that (\ref{Lin}) has a generalized exponential dichotomy (namely, a more general property) and obtain
sufficient conditions ensuring topological equivalence and strong topological equivalence. In addition, if (\ref{Lin}) 
has an $\alpha$--exponential dichotomy, we obtain sufficient conditions ensuring H\"older topological equivalence. In spite that our 
results are  strongly inspired in the works of Shi $\&$ Xiong \cite{Shi} and Jiang \cite{Jiang},\cite{Jian2} developed in 
the continuous case, the consequences obtained by our approach are not exactly the same ones.

The article is organized as follows: Section 2 introduces the main definitions (topological equivalence and exponential dichotomies).
Section 3 states the main results. Section 4 is devoted to several intermediate results. The proof
of the main results is developed in the section 5.

\section{Definitions}

The following definition has been introduced by Palmer \cite{Palmer} in the continuous case and extended to the discrete
case in a series of papers of Kurzweil, Papaschinopoulos and 
Schinas \cite{Kurzweil},\cite{Kurzweil-Papas},\cite{Schinas-Papas}:
\begin{definition}
\label{TopEq}
The systems \textnormal{(\ref{No-lin})} and \textnormal{(\ref{No-lin2})} are topologically equivalent
if there exists a map $H\colon \mathbb{Z}\times \mathbb{R}^{d}\to \mathbb{R}^{d}$ with the properties
\begin{itemize}
\item[(i)] For each fixed $n\in \mathbb{Z}$, the map $u\mapsto H(n,u)$ is an homeomorphism of $\mathbb{R}^{d}$.
\item[(ii)] $H(n,u)-u$ is bounded in $\mathbb{Z}\times \mathbb{R}^{d}$.
\item[(iii)] If $x_{n}$ is a solution of \textnormal{(\ref{No-lin})}, then $H[n,x_{n}]$ is a solution of  \textnormal{(\ref{No-lin2})}.
\end{itemize}
In addition, the map $u\mapsto L(n,u)=H^{-1}(n,u)$ has properties (i)--(iii) also.
\end{definition}

\begin{remark} Notice that the notation $H[n,x_{n}]$ is reserved to the special case when $x_{n}$ is a solution of (\ref{No-lin}).
On the other hand, the topological equivalence between (\ref{No-lin}) and (\ref{Lin}) can be defined in a similar way.
\end{remark}

The following definitions have been introduced by Shi and Xiong \cite{Shi} in the continuous case and we introduce its discrete version
\begin{definition} 
\label{strongly}
If the maps $u\mapsto H(n,u)$ and $u\mapsto L(n,u)$ are uniformly continuous for all $n\in \mathbb{Z}$ and satisfy properties
\textnormal{(i)--(iii)} of the previous definition, then we say that the systems 
\textnormal{(\ref{No-lin})} and \textnormal{(\ref{No-lin2})} are strongly topologically equivalent.
\end{definition}

\begin{definition}
\label{holder}
If the maps $u\mapsto H(n,u)$ and $u\mapsto L(n,u)$ are H\"older continuous for all $n\in \mathbb{Z}$ and satisfy properties
\textnormal{(i)--(iii)} of the previous definition, then we say that the systems 
\textnormal{(\ref{No-lin})} and \textnormal{(\ref{No-lin2})} are H\"older topologically equivalent.
\end{definition}

The problem of the topological equivalence has been extensively studied in the continuous non--autonomous case for
several authors, which follow the seminal paper of Palmer \cite{Palmer}. We pay special atention to the works
of \cite{Chen},\cite{Jiang},\cite{Jian2}, which use the concept of generalized exponential dichotomy introduced by
Martin \cite{Martin}.

Before to introduce the next definitions, we will denote the fundamental matrix of (\ref{Lin}) by $W_{n}$ (\emph{i.e.}, $W_{n+1}=A_{n}W_{n}$).

The generalized exponential dichotomy in a discrete context is defined as follows: 
\begin{definition}
\label{GDD}
The system \textnormal{(\ref{Lin})} has a generalized exponential dichotomy if there exists a projection $P$ ($P^{2}=P$), a constant $K\geq 1$ and 
a non--negative sequence $\{a_{n}\}_{n\in \mathbb{Z}}$ satisfying
\begin{equation}
\label{condition1}
\sum\limits_{j=p}^{q}a_{j}\to +\infty \quad \textnormal{as} \quad q\to +\infty \quad \textnormal{for fixed $p\in \mathbb{Z}$}, 
\end{equation}
\begin{equation}
\label{condition2}
\sum\limits_{j=p}^{q}a_{j}\to +\infty \quad \textnormal{as} \quad p\to -\infty \quad \textnormal{for fixed $q\in \mathbb{Z}$}
\end{equation}
such that 
\begin{equation}
 \label{GDD-def}
\left\{\begin{array}{rcl}
  ||W_{n}PW_{m}^{-1}||\leq K\exp\Big(-\sum\limits_{j=m}^{n}a_{j}\Big) & \textnormal{if}& n\geq m\\
   ||W_{n}(I-P)W_{m}^{-1}||\leq K\exp\Big(-\sum\limits_{j=n}^{m}a_{j}\Big) & \textnormal{if}& n< m.
   \end{array}\right.
\end{equation}
\end{definition}

It is interesting to observe that (\ref{condition1})--(\ref{condition2}) are satisfied in the case
$a_{j}=\alpha>0$ for any $j\in \mathbb{Z}$, which leads to the classic definition of $\alpha$--exponential 
dichotomy:
\begin{definition}
\label{ED}
The system \textnormal{(\ref{Lin})} has an $\alpha$--exponential dichotomy if there exists a projection $P$ ($P^{2}=P$), a 
constant $K\geq 1$ and 
\begin{equation}
 \label{ED-def}
\left\{\begin{array}{rcl}
  ||W_{n}PW_{m}^{-1}||\leq Ke^{-\alpha(n-m)} & \textnormal{if}& n\geq m\\
   ||W_{n}(I-P)W_{m}^{-1}||\leq Ke^{-\alpha(m-n)} & \textnormal{if}& n< m.
   \end{array}\right.
\end{equation}
\end{definition}

\begin{remark}
\label{relacion-exp}
The notation (\ref{ED-def}) was taken from \cite[p.165]{Papaschinopoulos} but other equivalent notations have been 
introduced in \cite{Kurzweil} and \cite{Papas-Schi}. For a deeper discussion about discrete dichotomies, we refer
the reader to \cite{Coffman} and \cite{Pinto}.
\end{remark}

The following example shows a linear system having a generalized exponential dichotomy but not an exponential one: let us 
consider (\ref{Lin}) with a matrix 
\begin{displaymath}
A_{n}=\left[\begin{array}{cc}
              b_{n} & 0\\
              0     & 1/b_{n}
             \end{array}\right],
\end{displaymath}
where $0<b_{n}=b_{-n}<1$ for any $n\in \mathbb{Z}$, $b_{n}\to 1$ monotonically as $n\to +\infty$ and (\ref{condition1})--(\ref{condition2}) are satisfied
for $a_{j}=|\ln(b_{j})|$. 

Notice that this system has the generalized exponential dichotomy since
\begin{displaymath}
W_{n}=\left[\begin{array}{cc}
              \prod\limits_{j=0}^{n-1}b_{j} & 0\\
              0     & \displaystyle \prod\limits_{j=0}^{n-1}\frac{1}{b_{j}}
             \end{array}\right]
\quad \textnormal{with} \quad
P=\left[\begin{array}{cc}
              1 & 0\\
              0     & 0
             \end{array}\right]
\end{displaymath}
leads to (\ref{GDD-def}) with $K=1$ and $a_{j}=|\ln(b_{j})|$. Nevertheless, let us observe that the system has not
an exponential dichotomy. Indeed, otherwise, there exists $\alpha>0$ such that
\begin{displaymath}
\sum\limits_{k=m}^{n}|\ln(b_{k})|\geq \alpha(n-m), \quad \textnormal{for any} \quad n\geq m,
\end{displaymath}
then, when considering $n=m+T$ (for some $T\in \mathbb{N}$), it follows that 
\begin{displaymath}
\frac{1}{T}\sum\limits_{k=m}^{m+T}|\ln(b_{k})|\geq \alpha, \quad \textnormal{for any} \quad m\in \mathbb{Z}.
\end{displaymath}
Now, we obtain a contradiction by letting $m\to +\infty$.

\begin{remark}
Notice that (\ref{GDD-def}) can be viewed in terms of the Green function:
\begin{equation}
 \label{Green}
G(n,m)=\left\{\begin{array}{ccl}
  W_{n}PW_{m}^{-1} & \textnormal{if}& n\geq m\\
  -W_{n}(I-P)W_{m}^{-1} & \textnormal{if}& n<m.
   \end{array}\right.
\end{equation}   
\end{remark}

\begin{definition}
For any sequence $g_{n}$ ($n\in \mathbb{Z}$), let us define the map 
\begin{displaymath}
N(n,g)= \sum\limits_{m=-\infty}^{n-1}K\exp\Big(-\sum\limits_{j=m+1}^{n}a_{j}\Big)g_{m}+
        \sum\limits_{m=n}^{\infty}K\exp\Big(-\sum\limits_{j=n}^{m+1}a_{j}\Big)g_{m},
\end{displaymath}
where $K$ and $a_{j}$ are stated in Definition \textnormal{\ref{GDD}}.
\end{definition}

\section{Main Results}

\begin{theorem}
\label{eq-top-f-g}
Suppose that \textnormal{(\ref{Lin})} has a generalized exponential dichotomy and the functions
$f$ and $g$ satisfy
\begin{itemize}
 \item[\textbf{(H1)}] $|f(n,x)|\leq F_{n}$ and $|g(n,x)|\leq G_{n}$ where $F_{n}$ and $G_{n}$ are nonnegative sequences.
 \item[\textbf{(H2)}] There exists $B>0$ such that the sequences $F_{n}$ and $G_{n}$ verify
 \begin{equation}
  \label{lip-GF}
 N(n,G+F)\leq B.
  \end{equation}
 \item[\textbf{(H3)}]  There exists $\theta\in (0,1)$ such that the sequence $r_{n}$ stated in \textnormal{\textbf{(A2)}} satisfies
\begin{equation}
\label{lip-r}
N(n,r)\leq \theta<1,
\end{equation}
 \item[\textbf{(H4)}] For any $(u,u',x,x')\in \mathbb{R}^{d}\times\mathbb{R}^{d}\times\mathbb{R}^{d}\times\mathbb{R}^{d}$ with $|u|,|u'|\leq B$, the function
\begin{displaymath}
\sum\limits_{k=-\infty}^{n-1-J}K\exp\Big(-\sum\limits_{p=k+1}^{n}a_{p}\Big)|\Delta_{k}(u,u',x,x')|+
\sum\limits_{k=n+J}^{\infty}K\exp\Big(-\sum\limits_{p=n}^{k+1}a_{p}\Big)|\Delta_{k}(u,u',x,x')|
\end{displaymath}
with $\Delta_{k}$ defined by
\begin{displaymath}
\Delta_{k}(u,u',x,x')=g(k,u+x)-g(k,u'+x')+f(k,x')-f(k,x),
\end{displaymath}
converges uniformly on $(u,u',x,x')$ to zero when $J\to + \infty$.
\item[\textbf{(H5)}] For any $(v,v',y,y')\in \mathbb{R}^{d}\times\mathbb{R}^{d}\times\mathbb{R}^{d}\times \mathbb{R}^{d}$ with $|v|,|v'|\leq B$, the function
\begin{displaymath}
\sum\limits_{k=-\infty}^{n-1-J}K\exp\Big(-\sum\limits_{p=k+1}^{n}a_{p}\Big)|\overline{\Delta}_{k}(v,v',y,y')|+
\sum\limits_{k=n+J}^{\infty}K\exp\Big(-\sum\limits_{p=n}^{k+1}a_{p}\Big)|\overline{\Delta}_{k}(v,v',y,y')|
\end{displaymath}
with $\overline{\Delta}_{k}$ defined by
\begin{displaymath}
\begin{array}{rcl}
\overline{\Delta}_{k}(v,v',y,y')&=&f(k,v+y)-f(k,v'+y')+g(k,y)-g(k,y'),
\end{array}
\end{displaymath} 
converges uniformly on $(v,v',y,y')$ to zero when $J\to +\infty$,
\end{itemize}
then \textnormal{(\ref{No-lin})} and \textnormal{(\ref{No-lin2})} are topologically equivalent.
\end{theorem}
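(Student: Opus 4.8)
The plan is to build the conjugacy $H$ from the solutions of (\ref{No-lin}), following the classical Palmer scheme adapted to the generalized dichotomy. Fix a solution $x_{\cdot}$ of (\ref{No-lin}) and look for a \emph{bounded} correction $z_{\cdot}$ such that $y_n=x_n+z_n$ solves (\ref{No-lin2}). Subtracting the two systems shows that $z_{\cdot}$ must satisfy $z_{n+1}=A_nz_n+b_n(z)$ with $b_n(z)=g(n,x_n+z_n)-f(n,x_n)$. Since (\ref{Lin}) enjoys a generalized exponential dichotomy, the only candidate bounded solution is the one furnished by the Green function (\ref{Green}),
\begin{displaymath}
z_n=\sum_{m=-\infty}^{n-1}W_nPW_{m+1}^{-1}b_m(z)-\sum_{m=n}^{+\infty}W_n(I-P)W_{m+1}^{-1}b_m(z)=\sum_{m\in\mathbb{Z}}G(n,m+1)\,b_m(z),
\end{displaymath}
so I would define the operator $\mathcal{T}$ to be this right-hand side and seek a fixed point in $\ell^{\infty}(\mathbb{Z},\mathbb{R}^{d})$.

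Next I would verify that $\mathcal{T}$ maps the ball of radius $B$ into itself and is a contraction. Using the dichotomy bounds (\ref{GDD-def}) together with \textbf{(H1)} one gets $|(\mathcal{T}z)_n|\leq N(n,G+F)\leq B$ by \textbf{(H2)}, so the ball $\{\|z\|_{\infty}\leq B\}$ is preserved. For the contraction estimate, the Lipschitz bound of \textbf{(A2)} gives $|b_m(z)-b_m(\tilde z)|\leq r_m|z_m-\tilde z_m|$, hence $|(\mathcal{T}z)_n-(\mathcal{T}\tilde z)_n|\leq N(n,r)\,\|z-\tilde z\|_{\infty}\leq\theta\|z-\tilde z\|_{\infty}$ by \textbf{(H3)}. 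The Banach fixed point theorem then yields a unique bounded $z=z(x)$; I set $H[n,x_n]=x_n+z_n(x)$, and since \textbf{(A1)} provides a globally defined solution $x^{u}$ through each $(n_0,u)$, this defines $H(n_0,u)=u+z_{n_0}(x^{u})$ on all of $\mathbb{Z}\times\mathbb{R}^{d}$. By construction $H[n,x_n]$ solves (\ref{No-lin2}) (property (iii)) and $|H(n,u)-u|=|z_n|\leq B$ (property (ii)). The map $L$ is built symmetrically, interchanging the roles of $f$ and $g$.

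To obtain that $L(n,\cdot)$ inverts $H(n,\cdot)$ I would avoid any estimate and invoke uniqueness of the fixed point. If $y_n=H[n,x_n]=x_n+z_n$, the defining equation for $L$ requests the bounded $w$ with $w_n=\sum_m G(n,m+1)[f(m,y_m+w_m)-g(m,y_m)]$; substituting the candidate $w=-z$ and using $y_m-z_m=x_m$ collapses the bracket to $-(g(m,x_m+z_m)-f(m,x_m))$, so $-z$ is itself a fixed point of the $L$-operator. As that operator is again a $\theta$-contraction by \textbf{(H3)}, its fixed point is unique and $w=-z$, whence $L[n,H[n,x_n]]=x_n$; the symmetric computation gives $H\circ L=\mathrm{id}$. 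Thus each $H(n,\cdot)$ is a bijection with inverse $L(n,\cdot)$.

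The main obstacle is the continuity of $u\mapsto H(n_0,u)$, and this is exactly where \textbf{(H4)}--\textbf{(H5)} enter. Taking $u_k\to u$ with solutions $x^{(k)},x$ and fixed points $z^{(k)},z$ (all bounded by $B$), I would pass to the limit in $z^{(k)}_{n_0}=\sum_m G(n_0,m+1)[g(m,x^{(k)}_m+z^{(k)}_m)-f(m,x^{(k)}_m)]$. Forward continuous dependence gives $x^{(k)}_m\to x_m$ on any finite window, while \textbf{(H4)}, applied with the bounded quantities $z^{(k)}_m,z_m$ occupying the $u,u'$ slots, makes the contribution of the indices $|m-n_0|\geq J$ uniformly negligible, reducing the analysis to the finite window $|m-n_0|<J$. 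Extracting a pointwise limit of $z^{(k)}$ by a diagonal argument and using uniqueness of the bounded fixed point forces that limit to be $z$, so $z^{(k)}_{n_0}\to z_{n_0}$ and $H(n_0,u_k)\to H(n_0,u)$; \textbf{(H5)} yields continuity of $L(n_0,\cdot)$ in the same manner. The difficulty is genuine: two solutions issuing from nearby points may separate as $|n|\to+\infty$, so elementary continuous-dependence estimates only control a finite window, and \textbf{(H4)}--\textbf{(H5)} are the hypotheses engineered precisely to render the remaining tails uniformly small so that the finite-window analysis suffices. Collecting the pieces, $H(n,\cdot)$ and $L(n,\cdot)$ satisfy (i)--(iii), so (\ref{No-lin}) and (\ref{No-lin2}) are topologically equivalent.
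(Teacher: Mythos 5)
Your construction of $H$ and $L$ coincides with the paper's: the bounded correction $z$ is the unique fixed point of the Green--function operator (this is the content of Lemmas \ref{bounded-Q}, \ref{lemme-0} and \ref{lemme-1}), with \textbf{(H1)}--\textbf{(H2)} giving invariance of the ball of radius $B$ and \textbf{(H3)} the contraction; and your proof that $L(n,\cdot)$ inverts $H(n,\cdot)$ by substituting $-z$ into the $L$-operator and invoking uniqueness of its fixed point is the same mechanism as the paper's estimate $|V-x|_{\infty}\leq\theta|V-x|_{\infty}$, just phrased more cleanly. Where you genuinely diverge is the continuity of $\xi\mapsto H(n,\xi)$, which is the heart of the matter. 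The paper proves it quantitatively: it shows by induction that every successive approximation $\chi_{j}$ admits an explicit continuity threshold $\delta_{j}$, splitting each sum into tails controlled by \textbf{(H4)} and a finite window controlled by the discrete Gronwall estimate of Lemma \ref{gronwall-on}, and then passes to the uniform limit via a three-term triangle inequality. You instead argue by sequential compactness: a diagonal extraction of a pointwise limit of the corrections $z^{(k)}$, passage to the limit in the fixed-point equation (tails by \textbf{(H4)}, finite window by continuous dependence and the Lipschitz property of $f$, $g$), and identification of the limit with $z$ by uniqueness of the bounded fixed point, followed by the standard subsequence principle. This is correct, and you apply \textbf{(H4)} with arguments that vary with $k$ exactly as the paper itself does, so that reading of the hypothesis is shared; both texts also implicitly assume two-sided (backward) solvability of the nonlinear systems, which \textbf{(A1)} alone does not quite give. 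The trade-off is that your argument is shorter but purely qualitative: it yields no modulus of continuity, whereas the paper's inductive scheme produces the quantity $\Gamma(n,\ell)$ and the explicit $\delta_{j}$'s, and it is precisely this quantitative information that is recycled to obtain uniform continuity in Corollary \ref{cor2} and H\"older continuity in Theorem \ref{eq-top-f-g2}. For Theorem \ref{eq-top-f-g} itself both routes are complete; yours simply could not be extended to the strong and H\"older versions without being redone.
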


\begin{remark}
\label{comments}
A continuous version of this theorem has been studied by Chen $\&$ Xia \cite{Chen} and Jiang \cite{Jiang}, this last,
considering $g(\cdot,\cdot)=0$. As in \cite[Theorem 2.2]{Chen}, we obtain a topologically equivalence result. Neverthless,
in \cite[Theorem 2]{Jian2} a result of strong topological equivalence is obtianed. We will explain
this point in the proof.

\textbf{(H1)} is a technical assumption which generalizes the case studied by 
Pa\-pas\-chi\-no\-pou\-los \cite{Papaschinopoulos}, where it is assumed that $|f(n,x)|$ and $|g(n,x)|$ are bounded by a 
small enough positive constant. We emphasize that $F_{n}$ and $G_{n}$ are not necessarily bounded sequences. 

\textbf{(H2)} is introduced in order to ensure that if (\ref{Lin}) is perturbed by linear combinations 
of $f$ and $g$, then the corresponding perturbed systems has
a unique bounded solution. Altough $F_{n}$ and $G_{n}$ could be unbounded sequences, \textbf{(H2)} says
that they must be dominated by terms $\exp(-\sum a_{n})$ at $\pm \infty$.

\textbf{(H3)} is usual in the topological equivalence literature and plays a key role in several 
intermediate steps as the proof of the continuity of the map $u\mapsto H(n,u)$ and the use of the Banach fixed point.
As before, $r_{n}$ is not necessarily a bounded sequence but  must be dominated by terms $\exp(-\sum a_{n})$
at $\pm \infty$.

\textbf{(H4)} and \textbf{(H5)} are introduced in order to prove the continuity of the maps $u\mapsto H(n,u)$
and $u\mapsto H^{-1}(n,u)$.  In spite of \textbf{(H2)} ensures that the corresponding limits are zero 
when $J\to +\infty$, the rate of convergence is not necessarily uniform, which is ensured by these hypotheses. It is
important to emphasize that if $g(\cdot,\cdot)=0$, these assumptions can be seen as the discrete version of a technical condition
introduced by Jiang in Theorem 2 from \cite{Jiang}.
\end{remark}

\begin{remark}
In the case $g(t,\cdot)=0$, we can obtain simpler conditions ensuring that (\ref{No-lin}) and (\ref{Lin}) are topologically
equivalent.
\end{remark}

\begin{corollary}
\label{cor2}
Suppose that \textnormal{(\ref{Lin})} has a generalized exponential dichotomy and the functions $f$ and $g$ satisfy \textnormal{\textbf{(H1)}}--
\textnormal{\textbf{(H5)}}. If $\{r_{n}\}$ verifies
\begin{equation}
\label{Stepanov}
\sup\limits_{n\in \mathbb{Z}}\frac{1}{2L}\sum\limits_{k=n-L}^{n+L}r_{k}<M_{0},
\end{equation}
then \textnormal{(\ref{No-lin})} is strongly topologically equivalent to \textnormal{(\ref{No-lin2})}.
\end{corollary}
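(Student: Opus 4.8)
The plan is to invoke Theorem \ref{eq-top-f-g} to obtain the equivalence $H$ together with its inverse $L=H^{-1}$, and then to promote the mere continuity of $u\mapsto H(n,u)$ and $u\mapsto L(n,u)$ to \emph{uniform} continuity, which by Definition \ref{strongly} is exactly what strong topological equivalence demands. Recall from the proof of Theorem \ref{eq-top-f-g} that $H$ has the form $H(n,u)=u+z_{n}(u)$: writing $x_{n}$ for the solution of (\ref{No-lin}) through $(n,u)$, the sequence $z_{n}=z_{n}(u)$ is the unique bounded solution of the auxiliary system $z_{n+1}=A_{n}z_{n}+g(n,x_{n}+z_{n})-f(n,x_{n})$, equivalently, via the Green function (\ref{Green}),
\[
z_{n}=\sum_{m=-\infty}^{\infty}G(n,m+1)\,\big[g(m,x_{m}+z_{m})-f(m,x_{m})\big],
\]
whose solvability and the a priori bound $|z_{n}|\le B$ come from \textbf{(H2)}--\textbf{(H3)}. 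An entirely symmetric formula defines $L(n,v)=v+w_{n}(v)$ along solutions of (\ref{No-lin2}), using $\overline{\Delta}$ and \textbf{(H5)}.

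Fix $u_{1},u_{2}$ and let $x^{(1)},x^{(2)}$ be the solutions of (\ref{No-lin}) with $x^{(i)}_{n}=u_{i}$, carrying the bounded sequences $z^{(1)},z^{(2)}$. Subtracting the two Green representations and using that $f,g\in\mathcal{S}$ have Lipschitz constant $r_{m}$, I would bound $|z^{(1)}_{n}-z^{(2)}_{n}|$ by a Green-weighted sum of $|\Delta_{m}(z^{(1)}_{m},z^{(2)}_{m},x^{(1)}_{m},x^{(2)}_{m})|$, which is admissible since $|z^{(i)}_{m}|\le B$. Splitting this sum at $|m-n|=J$, the tail $|m-n|\ge J$ tends to $0$ uniformly in $n$ and in the arguments by \textbf{(H4)}, while the central block is estimated by $\sum_{|m-n|<J}|G(n,m+1)|\,r_{m}\,(|x^{(1)}_{m}-x^{(2)}_{m}|+|z^{(1)}_{m}-z^{(2)}_{m}|)$.

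The heart of the matter is the central block. The term carrying $|z^{(1)}_{m}-z^{(2)}_{m}|$ is absorbed by \textbf{(H3)}: its coefficient is dominated by $N(n,r)\le\theta<1$, so after taking $\sup_{n}$ it contributes $\theta\,\sup_{n}|z^{(1)}_{n}-z^{(2)}_{n}|$ and is moved to the left-hand side. For the term carrying $|x^{(1)}_{m}-x^{(2)}_{m}|$ I would use the separation estimate for (\ref{No-lin}): since $A_{n}$ and $A_{n}^{-1}$ are bounded by \textbf{(A1)} and the perturbation is $r_{m}$-Lipschitz, on the window $|m-n|<J$ one has $|x^{(1)}_{m}-x^{(2)}_{m}|\le |u_{1}-u_{2}|\prod_{k}c_{0}(1+r_{k})$, the product running over at most $2J$ indices near $n$. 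This is where (\ref{Stepanov}) is indispensable: because $r_{k}$ need not be bounded, only the Stepanov-type control of its windowed averages yields $\sum_{k\in\mathrm{window}}r_{k}\le C\,J$ \emph{uniformly in $n$}, whence $\prod_{k}c_{0}(1+r_{k})\le c_{0}^{2J}e^{CJ}=:C_{J}$ with $C_{J}$ finite and independent of $n$. Thus the central block is at most $C_{J}\theta\,|u_{1}-u_{2}|+\theta\sup_{n}|z^{(1)}_{n}-z^{(2)}_{n}|$.

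Combining the three estimates gives $(1-\theta)\sup_{n}|z^{(1)}_{n}-z^{(2)}_{n}|\le \varepsilon_{J}+C_{J}\theta\,|u_{1}-u_{2}|$, where $\varepsilon_{J}\to0$ is the uniform tail bound of \textbf{(H4)}. Given $\varepsilon>0$ I would first fix $J$ with $\varepsilon_{J}<(1-\theta)\varepsilon/2$ and \emph{then} choose $\delta$ with $C_{J}\theta\,\delta<(1-\theta)\varepsilon/2$; since $C_{J}$ is frozen before $\delta$, this is legitimate even though $C_{J}\to\infty$ as $J\to\infty$ (so no single global Lipschitz constant for $H$ is produced, only uniform continuity — this is the point alluded to in Remark \ref{comments}). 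Then $|u_{1}-u_{2}|<\delta$ forces $\sup_{n}|z^{(1)}_{n}-z^{(2)}_{n}|<\varepsilon$, so $|H(n,u_{1})-H(n,u_{2})|\le|u_{1}-u_{2}|+|z^{(1)}_{n}-z^{(2)}_{n}|$ is small uniformly in $n$, giving uniform continuity of $H(n,\cdot)$. Running the identical argument with (\ref{No-lin2}), $w$, $\overline{\Delta}$ and \textbf{(H5)} in place of (\ref{No-lin}), $z$, $\Delta$ and \textbf{(H4)} — and noting that $g$ too is $r_{m}$-Lipschitz, so the \emph{same} bound (\ref{Stepanov}) controls the separation of solutions of (\ref{No-lin2}) — yields uniform continuity of $L(n,\cdot)$, and Definition \ref{strongly} finishes the proof. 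The principal obstacle is precisely the uniform-in-$n$ control of solution separation on finite windows when $r_{k}$ is unbounded; this is exactly the gap that (\ref{Stepanov}) fills and that is missing from the hypotheses of Theorem \ref{eq-top-f-g}.
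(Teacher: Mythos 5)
You have correctly isolated what (\ref{Stepanov}) is for: together with \textbf{(A1)} it makes the Gronwall factor of Lemma \ref{gronwall-on} over a window of fixed length uniform in the location of the window, and this is exactly the content of the paper's proof (which simply reruns the inductive proof of Theorem \ref{eq-top-f-g} and checks that $\Gamma(n,\ell)\leq \exp\big(2(M+M_{0})\ell\big)\theta$, so that the modulus $\delta$ there no longer depends on $n$). The gap is in your replacement of that finite induction by a single global absorption. Your window bound $|x^{(1)}_{m}-x^{(2)}_{m}|\leq C_{J}|u_{1}-u_{2}|$ is valid only for $m$ in a window centered at the \emph{anchor} time, i.e.\ the one time $n$ at which $x^{(i)}_{n}=u_{i}$; but the absorption requires the estimate
\[
|z^{(1)}_{t}-z^{(2)}_{t}|\leq\varepsilon_{J}+\theta\sup_{s\in\mathbb{Z}}|z^{(1)}_{s}-z^{(2)}_{s}|+2\theta C_{J}|u_{1}-u_{2}|
\]
at \emph{every} evaluation time $t$ of the bounded sequences $z^{(i)}$, because the supremum being absorbed ranges over all of $\mathbb{Z}$. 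For $t$ far from the anchor, the window $|m-t|<J$ sees $|x^{(1)}_{m}-x^{(2)}_{m}|$, and solutions of (\ref{No-lin}) separate exponentially, so this term is not $O(|u_{1}-u_{2}|)$ uniformly in $t$ and the inequality fails there; hence $\theta\sup_{s}|z^{(1)}_{s}-z^{(2)}_{s}|$ cannot be moved to the left-hand side. (If instead you re-anchor the solutions at each $t$, then $z^{(1)},z^{(2)}$ themselves change with $t$, and the supremum on the right is no longer the quantity on the left.)

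The defect is not merely technical: the conclusion your absorption would deliver, namely $\sup_{t}|z^{(1)}_{t}-z^{(2)}_{t}|\to0$ as $u_{1}\to u_{2}$, is false in general. By the identity (\ref{identidad-fun}) of Remark \ref{PIU}, $z^{(i)}_{t}=H(t,x^{(i)}_{t})-x^{(i)}_{t}$, and at times $t$ with $|t-n|\approx\log(1/|u_{1}-u_{2}|)$ the points $x^{(1)}_{t},x^{(2)}_{t}$ are order one apart, so there is no reason for $|z^{(1)}_{t}-z^{(2)}_{t}|$ to be small there. The contraction must therefore be exploited only finitely many times, on windows anchored at $n$. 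Two correct routes: (a) the paper's, where the uniform geometric convergence $\chi_{j}\to\chi$ supplies the global control and each inductive step involves only a window of half-length $\ell$ around $n$, producing the factor $\Gamma(n,\ell)$ that (\ref{Stepanov}) bounds uniformly in $n$; or (b) keep the fixed point but iterate your estimate on expanding windows,
\[
\sup_{|m-n|<qJ}|z^{(1)}_{m}-z^{(2)}_{m}|\leq\varepsilon_{J}+2\theta C_{(q+1)J}|u_{1}-u_{2}|+\theta\sup_{|m-n|<(q+1)J}|z^{(1)}_{m}-z^{(2)}_{m}|,
\]
stopping after $p$ steps with the crude bound $2B$ on the last supremum and using $\theta^{p}2B\to0$; choosing first $p$, then $J$, then $\delta$ (all independent of $n$ thanks to \textbf{(H4)} and (\ref{Stepanov})) yields the uniform continuity. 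A final small point: \textbf{(A1)} does not bound $||A_{n}^{-1}||$, so the backward half of your window estimate should be taken from Lemma \ref{gronwall-on} rather than from inverting $A_{n}$.
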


\begin{remark}
The left side of (\ref{Stepanov}) can be seen as a discrete Stepanov's norm (see \emph{e.g.}, \cite{Besicovitch}).  
In addition, (\ref{Stepanov}) is always satisfied when $\{r_{k}\}_{k}\in \ell_{\infty}(\mathbb{Z})$.
\end{remark}

As stated above, if $a_{n}=\alpha>0$, then (\ref{Lin}) has an $\alpha$--exponential dichotomy. In addition, if $F_{n}$, $G_{n}$
and $r_{n}$ are also positive constants (namely, $F$,$G$ and $r$), then \textbf{(H4)} and \textbf{(H5)} are immediately satisfied
since $|\Delta_{k}|$ and $|\overline{\Delta}_{k}|$ are bounded by $2(F+G)$ for any $k\in \mathbb{Z}$ and 
$$
\sum\limits_{k=-\infty}^{n-1-J}e^{-\alpha(n-k-1)} \quad \textnormal{and} \quad \sum\limits_{k=n+J}^{+\infty}e^{-\alpha(k+1-n)}
$$
converge to zero when $J\to +\infty$ and the rate of convergence is independent of the points. This allows to formulate:

\begin{theorem}
\label{eq-top-f-g2}
Suppose that \textnormal{(\ref{Lin})} has an $\alpha$--exponential dichotomy and the functions
$f$ and $g$ satisfy
\begin{itemize}
\item[\textbf{(D1)}] $|f(n,x)|\leq F$ and $|g(n,x)|\leq G$ where $F$ and $G$ are nonnegative constants.
\item[\textbf{(D2)}]  The functions $f$ and $g$ are in the set $\mathcal{S}'$ defined by
\begin{displaymath}
\mathcal{S}'=\Big\{ 
\mathcal{U}\colon \mathbb{Z}\times \mathbb{R}^{d}\to \mathbb{R}^{d}\colon 
|\mathcal{U}(n,x_{1})-\mathcal{U}(n,x_{2})|\leq r|x_{1}-x_{2}| \quad \textnormal{for any} \quad n\in \mathbb{Z}\Big\},
\end{displaymath}
where $r>0$ is such that
\begin{equation}
\theta = Kr\frac{1+e^{-\alpha}}{1-e^{-\alpha}}<1, 
\end{equation}
then \textnormal{(\ref{No-lin})} and \textnormal{(\ref{No-lin2})} are strongly topologically equivalent. 

\noindent Moreover, if
$M+r<\alpha$, then  \textnormal{(\ref{No-lin})} and \textnormal{(\ref{No-lin2})} are H\"older topologically equivalent.
\end{itemize}
\end{theorem}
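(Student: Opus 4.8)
The plan is to deduce the strong topological equivalence as a specialization of the machinery already developed, and to obtain the H\"older refinement through an explicit estimate of the modulus of continuity of the conjugacies. First I would note that an $\alpha$--exponential dichotomy is the generalized dichotomy of Definition~\ref{GDD} with $a_{j}\equiv\alpha$, so that on a constant sequence $c$ the operator $N$ reduces to convergent geometric series and satisfies $N(n,c)\le Kc\,\frac{1+e^{-\alpha}}{1-e^{-\alpha}}$, independently of $n$. With $F_{n}\equiv F$, $G_{n}\equiv G$ and $r_{n}\equiv r$, hypothesis \textbf{(D1)} is exactly \textbf{(H1)}; \textbf{(H2)} holds with $B=K(F+G)\frac{1+e^{-\alpha}}{1-e^{-\alpha}}$; and \textbf{(H3)} holds since $N(n,r)\le Kr\frac{1+e^{-\alpha}}{1-e^{-\alpha}}=\theta<1$ by \textbf{(D2)}. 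Hypotheses \textbf{(H4)} and \textbf{(H5)} follow from the geometric--tail argument recorded immediately before the statement, because $|\Delta_{k}|,|\overline{\Delta}_{k}|\le 2(F+G)$ while the tails $\sum_{k\le n-1-J}e^{-\alpha(n-k-1)}$ and $\sum_{k\ge n+J}e^{-\alpha(k+1-n)}$ tend to zero as $J\to+\infty$ at a rate independent of the points. Theorem~\ref{eq-top-f-g} then yields topological equivalence, and since $r_{n}\equiv r\in\ell_{\infty}(\mathbb{Z})$ the Stepanov condition~(\ref{Stepanov}) holds automatically, so Corollary~\ref{cor2} upgrades the conclusion to strong topological equivalence.

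For the H\"older statement I would estimate the conjugacy directly. From the construction it has the form $H(n,u)=u+z_{n}$, where $z$ is the unique bounded solution---represented through the Green function~(\ref{Green})---of the system whose inhomogeneity is $g(\cdot,x+z)-f(\cdot,x)$ taken along the solution $x$ of (\ref{No-lin}) issued from $(n,u)$. Fixing $n$ and two points $u_{1},u_{2}$ with $\delta=|u_{1}-u_{2}|$, and writing $w_{m}=x^{(1)}_{m}-x^{(2)}_{m}$ for the two induced solutions, the bound $\|A_{m}\|\le 1+M$ from \textbf{(A1)} together with the Lipschitz constant $r$ gives, via discrete Gronwall, the growth estimate $|w_{m}|\le(1+M+r)^{|m-n|}\delta\le e^{(M+r)|m-n|}\delta$. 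Inserting this in the Green--function representation of $z^{(1)}_{n}-z^{(2)}_{n}$, bounding the perturbation difference by $2r|w_{m}|+r|z^{(1)}_{m}-z^{(2)}_{m}|$ and absorbing the last term through the contraction constant $\theta<1$, the estimate reduces to controlling $\sum_{m}Ke^{-\alpha|m-n|}\min\{r\,e^{(M+r)|m-n|}\delta,\;2(F+G)\}$. Splitting this series at the crossover level $J\sim\frac{1}{M+r}\ln(1/\delta)$ and using that $M+r<\alpha$ keeps the near block a convergent geometric series, I obtain $|H(n,u_{1})-H(n,u_{2})|\le C\,\delta^{\beta}$ with $\beta=\beta(\alpha,M,r)>0$; the condition $\alpha-(M+r)>0$ is precisely what makes the governing series converge and the exponent positive. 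The same computation applied to $L=H^{-1}$ along solutions of (\ref{No-lin2}) finishes the proof.

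I expect the essential difficulty to lie in the two--sided growth estimate for $w_{m}$. The forward inequality $(m\ge n)$ is immediate from $\|A_{m}\|\le 1+M$, but the backward one $(m<n)$ would require an upper bound on $\|A_{m}^{-1}\|$, which \textbf{(A1)} does not supply directly: one only controls $\|A_{m}-I\|\le M$, and inverting needs either $M<1$ or, more naturally, a reformulation in which the dichotomy of the linear part---rather than a crude Gronwall estimate---produces the backward control. This is exactly the step where the threshold $M+r<\alpha$ becomes sharp and fixes the exponent $\beta$; when $M+r$ is small the same estimate in fact delivers $\beta=1$, that is, Lipschitz continuity. By contrast, closing the self--referential bound for $z^{(1)}-z^{(2)}$ through $\theta<1$ and checking the uniform decay needed for \textbf{(H4)}--\textbf{(H5)} are routine once the growth estimate is in hand, so I would concentrate the effort on the backward estimate and on the optimization over the splitting level $J$.
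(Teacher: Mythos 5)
Your first part coincides with the paper's own argument: the paper likewise reduces \textbf{(D1)}--\textbf{(D2)} to \textbf{(H1)}--\textbf{(H5)} (with $B=K(F+G)\frac{1+e^{-\alpha}}{1-e^{-\alpha}}$), invokes Theorem \ref{eq-top-f-g}, and then observes, exactly as in Corollary \ref{cor2}, that for constant $r_{n}\equiv r$ the quantity $\Gamma(n,\ell)$ admits a bound independent of $n$, so that the $\delta_{j}$ of (\ref{Induccion}) can be chosen independent of $n$, giving uniform continuity. Your second part also follows the paper's scheme --- Green--function representation of $\chi(n;(n,\xi))-\chi(n;(n,\xi'))$, a Gronwall growth estimate, absorption of the self--referential term through $\theta<1$, a near/far splitting of the series, and optimization over the splitting level --- but with one genuine sharpening: you keep the geometric structure $\sum_{j}e^{(M+r-\alpha)j}$ inside the near block, which converges precisely because $M+r<\alpha$, so the near block is $O(\delta)$, while your crossover $J\sim\frac{1}{M+r}\ln(1/\delta)$ makes the far block $O(\delta^{\alpha/(M+r)})=O(\delta)$; your exponent is therefore $\beta=1$ (Lipschitz), not merely some positive $\beta$. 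The paper instead majorizes the Gronwall factor by its maximum $e^{(M+r)\ell}$ over the whole block, chooses $\ell=\frac{1}{\alpha}\ln(1/|\xi-\xi'|)$, and obtains the weaker exponent $1-\frac{M+r}{\alpha}$. Both computations establish the stated H\"older claim; yours gives more.

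The difficulty you isolate in your last paragraph is real, and the paper does not resolve it either. Both proofs need the backward estimate (\ref{gronwall2}) for the near--block terms with $k<n$ (the terms $\mathcal{B}_{1}$, $\mathcal{C}_{1}$), and Lemma \ref{gronwall-on} asserts it with the remark that the case $n<k$ can be handled ``similarly''; it cannot. Backward iteration is governed by $\Vert A_{p}^{-1}\Vert$, which \textbf{(A1)} does not control: for the scalar system $A_{n}\equiv a$ with $0<a<1$ and $f=g=0$, one has $\Vert A_{n}-I\Vert=1-a=M$ and an $\alpha$--exponential dichotomy with $\alpha=\ln(1/a)$, $K=1$, yet $|x(n,k,\xi)-x(n,k,\xi')|=a^{-(k-n)}|\xi-\xi'|$, which exceeds the claimed bound $e^{M(k-n)}|\xi-\xi'|$ whenever $a<e^{-(1-a)}$ (for instance $a=0.1$). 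So your instinct is correct: closing this step needs either $M+r<1$ together with the elementary inequality $(1-M-r_{k})|w_{k}|\leq|w_{k+1}|$ --- which replaces the backward rate $M+r$ by $\ln\frac{1}{1-M-r}$ and correspondingly changes the threshold in the H\"older statement --- or an explicit hypothesis bounding $\Vert A_{n}^{-1}\Vert$. Your proposal shares this gap with the paper, but you identified it explicitly where the paper hides it; apart from this point, your argument is the paper's argument, sharpened in the near block.
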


\section{Preliminar Results}
\begin{lemma}
\label{zero-sol}
If \textnormal{(\ref{Lin})} has a generalized exponential dichotomy, then the unique solution of \textnormal{(\ref{Lin})}
bounded on $\mathbb{Z}$ is $y_{n}=0$.
\end{lemma}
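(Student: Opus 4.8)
The plan is to represent an arbitrary solution of (\ref{Lin}) through the fundamental matrix and then use the two divergence conditions (\ref{condition1})--(\ref{condition2}) to annihilate the stable and the unstable components of any bounded solution separately. First I would write any solution of (\ref{Lin}) as $z_{n}=W_{n}\xi$ for a constant vector $\xi\in\mathbb{R}^{d}$ (this is legitimate because \textbf{(A1)} makes every $A_{n}$, and hence $W_{n}$, nonsingular, so one recovers $\xi=W_{n_{0}}^{-1}z_{n_{0}}$ at any reference index and invokes uniqueness of solutions). Splitting via the projection, $\xi=P\xi+(I-P)\xi$, and assuming $z_{n}$ is bounded with $|z_{n}|\le C$ for all $n\in\mathbb{Z}$, it suffices to show $P\xi=0$ and $(I-P)\xi=0$.

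To kill the unstable component I would fix $n$ and, for any $m>n$, use the trivial identity $W_{n}(I-P)\xi=W_{n}(I-P)W_{m}^{-1}z_{m}$ together with the second estimate in (\ref{GDD-def}) to get
\[
|W_{n}(I-P)\xi|\le K\exp\Big(-\sum_{j=n}^{m}a_{j}\Big)|z_{m}|\le KC\exp\Big(-\sum_{j=n}^{m}a_{j}\Big).
\]
The left-hand side does not depend on $m$, so letting $m\to+\infty$ and invoking (\ref{condition1}) forces it to vanish; since $W_{n}$ is nonsingular this yields $(I-P)\xi=0$. Symmetrically, fixing $n$ and taking $m<n$, I would write $W_{n}P\xi=W_{n}PW_{m}^{-1}z_{m}$ and apply the first estimate in (\ref{GDD-def}):
\[
|W_{n}P\xi|\le K\exp\Big(-\sum_{j=m}^{n}a_{j}\Big)|z_{m}|\le KC\exp\Big(-\sum_{j=m}^{n}a_{j}\Big),
\]
now letting $m\to-\infty$ and using (\ref{condition2}) to conclude $P\xi=0$. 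Adding the two identities gives $\xi=0$, hence $z_{n}=W_{n}\xi=0$ for every $n$.

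I do not anticipate a genuine obstacle, since this is the discrete counterpart of Palmer's classical observation. The only point that requires attention is that each estimate controls a single spectral component over a one-sided range of $m$, so both divergence hypotheses are genuinely used: (\ref{condition1}) annihilates the unstable component $(I-P)\xi$ as $m\to+\infty$, while (\ref{condition2}) annihilates the stable component $P\xi$ as $m\to-\infty$. One must also keep the summation indices aligned exactly with those appearing in (\ref{GDD-def}) and in (\ref{condition1})--(\ref{condition2}), as that is precisely where the decay is supplied.
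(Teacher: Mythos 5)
Your proof is correct and takes essentially the same approach as the paper's: both decompose along $P$ and $I-P$ and use the two estimates in (\ref{GDD-def}) together with (\ref{condition1})--(\ref{condition2}) to eliminate each component of a bounded solution. The only difference is organizational: the paper argues contrapositively (a nonzero projected component forces blow-up as $m\to\pm\infty$), whereas you squeeze each projected component directly to zero using the bound on the solution, which is the same computation read in the other direction.
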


\begin{proof}
As in \cite[p.11]{Coppel}, it is easy to verify that (\ref{GDD-def}) implies 
\begin{displaymath}
\begin{array}{rcl}
||W_{n}P\xi||\leq K\exp\Big(-\sum\limits_{j=m}^{n}a_{j}\Big)||W_{m}P\xi|| & \textnormal{if} & n\geq m\\
||W_{n}(I-P)\xi||\leq K\exp\Big(-\sum\limits_{j=n}^{m}a_{j}\Big)||W_{m}(I-P)\xi|| & \textnormal{if}& n< m.
\end{array}
\end{displaymath}
for any initial condition $\xi\in \mathbb{R}^{d}$. In addition, let us assume that the projection $P$ has rank $k\leq d$. 

The first inequality above is equivalent to
\begin{displaymath}
\frac{1}{K}\exp\Big(\sum\limits_{j=m}^{n}a_{j}\Big) ||W_{n}P\xi||\leq ||W_{m}P\xi|| \quad \textnormal{if} \quad n\geq m.
\end{displaymath}

By using (\ref{condition2}), we can see that there exists a $k$--dimensional subspace of initial conditions
leading to solutions tending to the infinite when $m\to -\infty$.

On the other hand, the second inequality is equivalent to
\begin{displaymath}
\frac{1}{K}\exp\Big(\sum\limits_{j=n}^{m}a_{j}\Big)||W_{n}(I-P)\xi||\leq ||W_{m}(I-P)\xi|| \quad \textnormal{if} \quad n< m.
\end{displaymath}

As before, by (\ref{condition1}), we can see that there exists a $(d-k)$--dimensional subspace 
of initial conditions leading to solutions tending to the infinite when $m\to +\infty$. In consequence,
the unique bounded solution can be the trivial one.
\end{proof}

\begin{lemma}
\label{bounded}
If \textnormal{(\ref{Lin})} has a generalized exponential dichotomy and a sequence $q_{n}$
verifies 
\begin{itemize}
\item[\textbf{(E1)}] $\sup\limits_{n\in \mathbb{Z}}|N(n,|q|)|<+\infty$,
\end{itemize}
then the system
\begin{equation}
\label{auxiliary}
z_{n+1}=A_{n}z_{n}+q_{n}
\end{equation}
has a unique bounded solution given by
\begin{displaymath}
\hat{\phi}_{n}=\sum\limits_{m=-\infty}^{\infty}G(n,m+1)q_{m}.
\end{displaymath}

\end{lemma}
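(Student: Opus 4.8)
The plan is to establish the result in three stages: show that the bi-infinite series defining $\hat{\phi}_n$ converges and is bounded, verify that it solves (\ref{auxiliary}), and then deduce uniqueness from Lemma \ref{zero-sol}. I would begin by unfolding $\hat{\phi}_n$ according to the definition (\ref{Green}) of the Green function evaluated at $(n,m+1)$, splitting the sum at the discontinuity of $G$. Since $G(n,m+1)=W_nPW_{m+1}^{-1}$ precisely when $n\geq m+1$ (that is, $m\leq n-1$) and $G(n,m+1)=-W_n(I-P)W_{m+1}^{-1}$ when $n<m+1$ (that is, $m\geq n$), one obtains
\begin{displaymath}
\hat{\phi}_n=\sum_{m=-\infty}^{n-1}W_nPW_{m+1}^{-1}q_m-\sum_{m=n}^{\infty}W_n(I-P)W_{m+1}^{-1}q_m.
\end{displaymath}
Applying the dichotomy estimates (\ref{GDD-def}) termwise to each summand yields $|\hat{\phi}_n|\leq N(n,|q|)$, so hypothesis \textbf{(E1)} guarantees the absolute convergence of both series and a bound on $\hat{\phi}_n$ uniform in $n$.

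Next I would check that $\hat{\phi}_n$ is a solution. Using the fundamental relation $A_nW_n=W_{n+1}$, I would apply $A_n$ termwise to the series above, which is legitimate by the absolute convergence just established, so that $A_nW_nPW_{m+1}^{-1}=W_{n+1}PW_{m+1}^{-1}$ and likewise for the complementary projection. Comparing the resulting expression for $A_n\hat{\phi}_n$ with the series defining $\hat{\phi}_{n+1}$ (whose summation ranges are shifted by one), the two $P$-sums differ only by the term $W_{n+1}PW_{n+1}^{-1}q_n$ and the two $(I-P)$-sums differ only by $W_{n+1}(I-P)W_{n+1}^{-1}q_n$. Adding these contributions and using $P+(I-P)=I$ gives $\hat{\phi}_{n+1}-A_n\hat{\phi}_n=W_{n+1}W_{n+1}^{-1}q_n=q_n$, so that $\hat{\phi}_n$ indeed satisfies (\ref{auxiliary}).

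Finally, for uniqueness, I would suppose $\psi_n$ is any other bounded solution of (\ref{auxiliary}); then $\hat{\phi}_n-\psi_n$ is a bounded solution of the homogeneous system (\ref{Lin}), which by Lemma \ref{zero-sol} must be identically zero, whence $\psi_n=\hat{\phi}_n$. The main obstacle is the index bookkeeping in the second stage: one must track carefully how the jump of the Green function across $n=m+1$ produces exactly the inhomogeneous term $q_n$ and no more, and the termwise application of $A_n$ to an infinite series must be justified — which is possible precisely because the bound $|\hat{\phi}_n|\leq N(n,|q|)$ from the first stage renders the rearrangement admissible.
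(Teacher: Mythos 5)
Your proposal is correct. The boundedness stage coincides with the paper's: the same termwise dichotomy estimate giving $|\hat{\phi}_{n}|\leq N(n,|q|)$, with \textbf{(E1)} closing the argument; and your explicit index-shift computation showing $\hat{\phi}_{n+1}-A_{n}\hat{\phi}_{n}=W_{n+1}PW_{n+1}^{-1}q_{n}+W_{n+1}(I-P)W_{n+1}^{-1}q_{n}=q_{n}$ is carried out in detail where the paper simply cites Elaydi. Where you genuinely diverge is uniqueness: you note that the difference of two bounded solutions of (\ref{auxiliary}) is a bounded solution of the homogeneous system (\ref{Lin}), hence vanishes by Lemma \ref{zero-sol}. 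The paper instead follows Chen's continuous-time template: it takes an arbitrary bounded solution $y_{n}$, writes it by variation of parameters from $n=0$, splits through $P$ and $I-P$, re-indexes into bi-infinite tails (using \textbf{(E1)} to justify convergence of the tail sums denoted $y_{1}$ and $y_{2}$), and concludes that $y_{n}-\hat{\phi}_{n}=W_{n}W_{0}^{-1}(y_{0}-y_{1}+y_{2})$ is a bounded homogeneous solution, to which Lemma \ref{zero-sol} is applied. Both routes hinge on Lemma \ref{zero-sol}; yours is shorter and avoids the convergence bookkeeping for $y_{1},y_{2}$. What the paper's longer route buys is a reusable fact: it shows that \emph{every} bounded solution of the inhomogeneous system necessarily admits the Green-function representation, and this derivation is exactly what is invoked later in the uniqueness part of Lemma \ref{bounded-Q}, where the perturbation $q(n,z_{n})$ depends on the state, the difference of two solutions is no longer a solution of the homogeneous system, and your shortcut does not apply.
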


\begin{proof}The proof has two steps:\\

\noindent\emph{Boundedness of $\hat{\phi}_{n}$:} It is straightforward (see \emph{e.g.}, \cite{Elaydi}) to see that $\hat{\phi}_{n}$ is solution of (\ref{auxiliary}). In 
order to verify that $\hat{\phi}_{n}$ is bounded, notice that:
\begin{displaymath}
\begin{array}{rcl}
|\hat{\phi}_{n}|& \leq &\sum\limits_{m=-\infty}^{n-1}|G(n,m+1)q_{m}|+\sum\limits_{m=n}^{\infty}|G(n,m+1)q_{m}|\\\\
        &=&\sum\limits_{m=-\infty}^{n-1}|W_{n}PW_{m+1}^{-1}q_{m}|+\sum\limits_{m=n}^{\infty}|W_{n}(I-P)W_{m+1}^{-1}q_{m}|\\
        &\leq&\sum\limits_{m=-\infty}^{n-1}K\exp\Big(-\sum\limits_{j=m+1}^{n}a_{j}\Big)|q_{n}|+
        \sum\limits_{m=n}^{\infty}K\exp\Big(-\sum\limits_{j=n}^{m+1}a_{j}\Big)|q_{n}|\\\\
        &=&N(n,|q|)
\end{array}
\end{displaymath}
and the boundedness follows from \textbf{(E1)}.

\noindent\emph{Uniqueness of the bounded solution:} As in \cite{Chen} (continuous framework), let $y_{n}$ be 
another bounded solution of (\ref{auxiliary}). By variation of parameters (see \emph{e.g.} \cite[Th. 3.17]{Elaydi}), we know that
\begin{displaymath}
\begin{array}{rcl}
y_{n}&=&W_{n}W_{0}^{-1}y_{0}+\sum\limits_{r=0}^{n-1}W_{n}W_{r+1}^{-1}q_{r}\\\\
     &=&W_{n}W_{0}^{-1}y_{0}+\sum\limits_{r=0}^{n-1}W_{n}PW_{r+1}^{-1}q_{r}+\sum\limits_{r=0}^{n-1}W_{n}(I-P)W_{r+1}^{-1}q_{r}\\\\
     &=&W_{n}W_{0}^{-1}y_{0}+\sum\limits_{r=-\infty}^{n-1}W_{n}PW_{r+1}^{-1}q_{r}-\sum\limits_{r=-\infty}^{-1}W_{n}PW_{r+1}^{-1}q_{r}\\\\
     & &+\sum\limits_{r=0}^{\infty}W_{n}(I-P)W_{r+1}^{-1}q_{r}-\sum\limits_{r=n}^{\infty}W_{n}(I-P)W_{r+1}^{-1}q_{r}.
     \end{array}
\end{displaymath}

It is important to note that the expression above is well defind because
\begin{displaymath}
\begin{array}{rcl}
\left|\sum\limits_{r=-\infty}^{-1}W_{n}PW_{r+1}^{-1}q_{r}\right|&=&\left|W_{n}W_{0}^{-1}\sum\limits_{r=-\infty}^{-1}W_{0}PW_{r+1}^{-1}q_{r}\right|  \\\\
                        &\leq & \left| W_{n}W_{0}^{-1}\right| \sum\limits_{r=-\infty}^{-1}|W_{0}PW_{r+1}^{-1}q_{r}|\\\\
                        &\leq & \left|W_{n}W_{0}^{-1}\right|\sum\limits_{r=-\infty}^{-1}K\exp\Big(-\sum\limits_{j=r}^{-1}a_{j}\Big)|q_{r}|\\\\
                        &\leq & |W_{n}W_{0}^{-1}| N(r,|q|)
\end{array}
                        \end{displaymath}
and let us denote
\begin{displaymath}
 \sum\limits_{r=-\infty}^{-1}W_{n}PW_{r+1}^{-1}q_{r}=W_{n}W_{0}^{-1}y_{1}.
\end{displaymath}

In a similar way, we can verify that
\begin{displaymath}
\sum\limits_{r=n}^{\infty}W_{n}(I-P)W_{r+1}^{-1}q_{r}=W_{n}W_{0}^{-1}y_{2}.
\end{displaymath}

Now, we can see that
\begin{displaymath}
\begin{array}{rcl}
y_{n}&=&W_{n}W_{0}^{-1}(y_{0}-y_{1}+y_{2})+\sum\limits_{r=-\infty}^{n-1}W_{n}PW_{r+1}^{-1}q_{r}-\sum\limits_{r=n}^{\infty}W_{n}(I-P)W_{r+1}^{-1}q_{r}.
\end{array}
\end{displaymath}

As $y_{n}$ is a bounded solution of (\ref{auxiliary}) and \textbf{(E1)} implies that
$$
\sum\limits_{r=-\infty}^{n-1}W_{n}PW_{r+1}^{-1}q_{r}-\sum\limits_{r=n}^{\infty}W_{n}(I-P)W_{r+1}^{-1}q_{r}
$$
is also bounded, it follows that $x_{n}=W_{n}W_{0}^{-1}(y_{0}-y_{1}+y_{2})$ is a bounded 
solution of (\ref{Lin}). Finally, Lemma \ref{zero-sol} implies that $y_{0}=y_{1}-y_{2}$ and the uniqueness follows.
\end{proof}

\begin{lemma}
\label{bounded-Q}
If \textnormal{(\ref{Lin})} has a generalized exponential dichotomy and the system
\begin{equation}
\label{auxiliary-Q}
z_{n+1}=A_{n}z_{n}+q(n,z_{n})
\end{equation}
is such that 
\begin{equation}
\label{Cota-Q}
|q(n,z)|\leq Q_{n} \quad \textnormal{and} \quad |q(n,z)-q(n,\tilde{z})|\leq r_{n}|z-\tilde{z}|,
\end{equation}
where $Q_{n}$ and $r_{n}$ satisfy
\begin{equation}
\label{CTQ}
N(n,Q)\leq \tilde{B} \quad \textnormal{and} \quad N(n,r)\leq \theta <1,
\end{equation}
then, there exists a unique bounded solution of \textnormal{(\ref{auxiliary-Q})}. 
\end{lemma}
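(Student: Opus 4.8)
The plan is to recast the bounded solutions of the nonlinear system \textnormal{(\ref{auxiliary-Q})} as the fixed points of an operator built from the Green function \textnormal{(\ref{Green})}, and then to invoke the Banach fixed point theorem. First I would work in the Banach space $\ell_{\infty}=\ell_{\infty}(\mathbb{Z},\mathbb{R}^{d})$ of bounded sequences endowed with the supremum norm $\|z\|_{\infty}=\sup_{n}|z_{n}|$, and define, for $z=\{z_{n}\}\in \ell_{\infty}$, the operator
\begin{displaymath}
(\mathcal{T}z)_{n}=\sum\limits_{m=-\infty}^{\infty}G(n,m+1)\,q(m,z_{m}).
\end{displaymath}
The first task is to check that $\mathcal{T}$ is well defined and sends $\ell_{\infty}$ into itself. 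Proceeding exactly as in the \emph{Boundedness} step of Lemma \textnormal{\ref{bounded}}, the dichotomy bounds \textnormal{(\ref{GDD-def})} together with the first estimate in \textnormal{(\ref{Cota-Q})} give $|(\mathcal{T}z)_{n}|\leq N(n,Q)\leq \tilde{B}$, so the series converges absolutely and $\mathcal{T}z$ is bounded uniformly by $\tilde{B}$.

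Next I would establish that $\mathcal{T}$ is a contraction. For $z,\tilde{z}\in \ell_{\infty}$, the Lipschitz bound in \textnormal{(\ref{Cota-Q})} yields $|q(m,z_{m})-q(m,\tilde{z}_{m})|\leq r_{m}|z_{m}-\tilde{z}_{m}|\leq r_{m}\|z-\tilde{z}\|_{\infty}$, and splitting the sum defining $\mathcal{T}z-\mathcal{T}\tilde{z}$ at $m=n$ and applying \textnormal{(\ref{GDD-def})} exactly reproduces the kernel of $N(n,r)$, so that
\begin{displaymath}
|(\mathcal{T}z)_{n}-(\mathcal{T}\tilde{z})_{n}|\leq N(n,r)\,\|z-\tilde{z}\|_{\infty}\leq \theta\,\|z-\tilde{z}\|_{\infty}.
\end{displaymath}
Taking the supremum over $n$ gives $\|\mathcal{T}z-\mathcal{T}\tilde{z}\|_{\infty}\leq \theta\|z-\tilde{z}\|_{\infty}$ with $\theta<1$ by \textnormal{(\ref{CTQ})}. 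Hence $\mathcal{T}$ admits a unique fixed point $\phi\in \ell_{\infty}$, and one checks (as in \cite{Elaydi}) that a fixed point of $\mathcal{T}$ is precisely a bounded solution of \textnormal{(\ref{auxiliary-Q})}.

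The step I expect to require the most care is not the contraction estimate but the reverse identification: showing that \emph{every} bounded solution of \textnormal{(\ref{auxiliary-Q})}, and not merely every fixed point of $\mathcal{T}$, is accounted for, so that uniqueness is genuine. For this I would freeze a given bounded solution $z_{n}$ and set $q_{n}:=q(n,z_{n})$; then $|q_{n}|\leq Q_{n}$ with $\sup_{n}N(n,|q|)\leq \tilde{B}<+\infty$, so hypothesis \textnormal{\textbf{(E1)}} holds and Lemma \textnormal{\ref{bounded}} forces $z_{n}=\sum_{m}G(n,m+1)q_{m}=(\mathcal{T}z)_{n}$. Thus bounded solutions coincide with fixed points of $\mathcal{T}$, and the uniqueness supplied by the Banach fixed point theorem transfers to \textnormal{(\ref{auxiliary-Q})}, completing the proof.
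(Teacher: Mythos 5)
Your proposal is correct and takes essentially the same route as the paper: the paper proves existence by successive approximations, and its iterates $\varphi^{(j)}$ are exactly the Picard iterates of your operator $\mathcal{T}$ (each step justified by Lemma \ref{bounded}), with the same contraction constant $\theta$ from (\ref{CTQ}); its uniqueness step likewise identifies any bounded solution with a solution of the Green-function fixed-point equation before applying the contraction estimate. Your only deviation is cosmetic --- you invoke the Banach fixed point theorem abstractly and apply Lemma \ref{bounded} as a black box for the reverse identification, where the paper re-runs that lemma's proof --- so the two arguments coincide in substance.
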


\begin{proof}
\noindent\emph{Existence:} Let us consider the sequence $\{\varphi^{(j)}\}_{j}$, recursively defined
by \begin{displaymath}
\varphi_{n+1}^{(j)}=A_{n}\varphi_{n}^{(j)}+q(n,\varphi_{n}^{(j-1)}),
\end{displaymath}
where $\varphi^{(0)}$ is an arbitrary sequence in $\ell_{\infty}(\mathbb{Z})$ satisfying $|\varphi^{(0)}|_{\infty}\leq \tilde{B}$.

By using Lemma \ref{bounded} combined with the first inequalities of (\ref{Cota-Q})--(\ref{CTQ}), we can see that 
$\varphi^{(j)}$ is the unique solution of the above system and verifies
\begin{displaymath}
\varphi_{n}^{(j)}=\sum\limits_{k=-\infty}^{+\infty}G(n,k+1)q(k,\varphi_{k}^{(j-1)}), 
\end{displaymath}
with $|\varphi^{(j)}|_{\infty}\leq \tilde{B}$ for any $j\in \mathbb{N}$.

On the other hand, the second inequalities of (\ref{Cota-Q})--(\ref{CTQ}) imply that 
$$
|\varphi^{(j)}-{\varphi}^{(j-1)}|_{\infty}\leq \theta|\varphi^{(j-1)}-\varphi^{(j-2)}|_{\infty}
$$
with $\theta\in (0,1)$, and we can see that $\varphi^{(j)}$ is a Cauchy sequence in $\ell_{\infty}(\mathbb{Z})$. Now, letting $j\to +\infty$
in $\varphi_{n}^{(j)}$, it follows that
$$
\varphi_{n}^{*}=\sum\limits_{k=-\infty}^{+\infty}G(n,k+1)q(k,\varphi_{k}^{*}), 
$$
is a bounded solution of (\ref{auxiliary-Q}).

\noindent\emph{Uniqueness:} Let $y_{n}$ be another bounded solution of (\ref{auxiliary-Q}). By following the lines of the proof
of Lemma \ref{bounded} combined with (\ref{Cota-Q})--(\ref{CTQ}), the reader can verify that
$$
y_{n}=\sum\limits_{k=-\infty}^{+\infty}G(n,k+1)q(k,y_{k}).
$$

Finally, by using the second inequalities of (\ref{Cota-Q})--(\ref{CTQ}), we have that
$$
|\varphi^{*}-y|_{\infty}\leq \theta |\varphi^{*}-y|_{\infty}
$$
and the uniqueness follows since $0<\theta<1$.
\end{proof}
\begin{lemma}
\label{lemme-0}
Suppose that \textnormal{(\ref{Lin})} has a generalized exponential dichotomy. If the systems
\textnormal{(\ref{No-lin})}--\textnormal{(\ref{No-lin2})} satisfy \textnormal{\textbf{(H1)}--\textbf{(H3)}}
and $x(n,m,\xi)$ is the solution of \textnormal{(\ref{No-lin})} with initial condition $\xi$ at $n=m$, then 
the $(m,\xi)$--parameter dependent system
\begin{equation}
\label{auxiliary2}
w_{n+1}=A_{n}w_{n}-f(n,x(n,m,\xi))+g(n,w_{n}+x(n,m,\xi)).
\end{equation}
has a unique bounded solution $n\mapsto \chi(n;(m,\xi))$ with $|\chi(n;(m,\xi))|_{\infty}\leq B$.
\end{lemma}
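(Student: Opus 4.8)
The plan is to recognize that the parameter-dependent system (\ref{auxiliary2}) is nothing but a particular instance of the perturbed system (\ref{auxiliary-Q}) treated in Lemma \ref{bounded-Q}, so that the whole statement follows by verifying the hypotheses (\ref{Cota-Q})--(\ref{CTQ}) of that lemma. First I would fix the parameters $(m,\xi)$; since (A1) guarantees that $A_{n}$ is nonsingular, the solution $x(n,m,\xi)$ of (\ref{No-lin}) is defined for every $n\in\mathbb{Z}$, and therefore the function
\[
q(n,w):=g(n,w+x(n,m,\xi))-f(n,x(n,m,\xi))
\]
is well defined on $\mathbb{Z}\times\mathbb{R}^{d}$. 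With this notation (\ref{auxiliary2}) reads exactly $w_{n+1}=A_{n}w_{n}+q(n,w_{n})$, which is the equation (\ref{auxiliary-Q}).

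Next I would check the two estimates in (\ref{Cota-Q}). For the uniform bound, observe that the term $f(n,x(n,m,\xi))$ does not depend on $w$, so by (H1)
\[
|q(n,w)|\leq |g(n,w+x(n,m,\xi))|+|f(n,x(n,m,\xi))|\leq G_{n}+F_{n}=:Q_{n}
\]
independently of $w$. For the Lipschitz estimate, only the $g$-term carries the dependence on $w$, hence (A2) gives
\[
|q(n,w)-q(n,\tilde w)|=|g(n,w+x(n,m,\xi))-g(n,\tilde w+x(n,m,\xi))|\leq r_{n}|w-\tilde w|,
\]
with the \emph{same} sequence $r_{n}$. Consequently (\ref{CTQ}) holds with $\tilde B=B$: indeed, monotonicity of $N$ in its nonnegative sequence argument together with (H2) yields $N(n,Q)=N(n,G+F)\leq B$, and (H3) gives $N(n,r)\leq\theta<1$.

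Having matched every hypothesis, Lemma \ref{bounded-Q} applies verbatim and produces a unique bounded solution of (\ref{auxiliary2}), which I denote $n\mapsto\chi(n;(m,\xi))$. The asserted bound is then read off the Green-function representation of this solution: from $\chi(n;(m,\xi))=\sum_{k}G(n,k+1)q(k,\chi_{k})$ and the boundedness computation of Lemma \ref{bounded} one gets $|\chi(n;(m,\xi))|\leq N(n,|q|)\leq N(n,G+F)\leq B$. The one point deserving care --- and the step I expect to be the only real obstacle --- is precisely that the bound $Q_{n}=F_{n}+G_{n}$ on $q$ is uniform in its second argument; this is what legitimizes the choice $\tilde B=B$ in Lemma \ref{bounded-Q}, keeps the fixed-point iteration inside the closed ball of radius $B$, and yields the sharp constant $B$ in the conclusion rather than a larger one. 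Everything else is a direct substitution into the already-established Lemma \ref{bounded-Q}.
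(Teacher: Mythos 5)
Your proof is correct and follows exactly the paper's own route: both reduce (\ref{auxiliary2}) to Lemma \ref{bounded-Q} by taking $q(n,w)=g(n,w+x(n,m,\xi))-f(n,x(n,m,\xi))$ and invoking \textbf{(H1)}--\textbf{(H3)} to verify (\ref{Cota-Q})--(\ref{CTQ}) with $Q_{n}=F_{n}+G_{n}$ and $\tilde{B}=B$. The only difference is that the paper states this in a single sentence, whereas you spell out the verification of the hypotheses and the derivation of the bound $|\chi(n;(m,\xi))|_{\infty}\leq B$ from the Green-function representation, which is exactly what the paper leaves implicit.
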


\begin{proof}
By using \textbf{(H1)}--\textbf{(H3)} and Lemma \ref{bounded-Q} with $q(n,w_{n})=-f(n,x(n,m,\xi))+g(n,w_{n}+x(n,m,\xi))$, we know that the unique bounded solution of (\ref{auxiliary2})
is
\begin{equation}
\label{cauchy2}
\chi(n;(m,\xi))=\sum\limits_{k=-\infty}^{+\infty}G(n,k+1)\{g(k,\chi(k;(m,\xi))+x_{k,m}(\xi))-f(k,x_{k,m}(\xi))\}, 
\end{equation}
where $x_{k,m}(\xi)=x(k,m,\xi)$ and the Lemma follows.
\end{proof}


\begin{lemma}
\label{lemme-00}
Suppose that \textnormal{(\ref{Lin})} has a generalized exponential dichotomy. If the systems
\textnormal{(\ref{No-lin})}--\textnormal{(\ref{No-lin2})} satisfy \textnormal{\textbf{(H1)}--\textbf{(H3)}}
and $y(n,m,\nu)$ is the solution of \textnormal{(\ref{No-lin2})} with initial condition $\nu$ at $n=m$, then 
the $(m,\nu)$--parameter dependent system
\begin{equation}
\label{auxiliary3}
z_{n+1}=A_{n}z_{n}+f(n,z_{n}+y(n,m,\nu))-g(n,y(n,m,\nu)),
\end{equation}
has a unique bounded solution $n\mapsto \vartheta(n;(m,\nu))$ with $|\vartheta(n;(m,\nu))|_{\infty} \leq B$. 
\end{lemma}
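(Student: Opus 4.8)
The plan is to recognize Lemma \ref{lemme-00} as the exact symmetric counterpart of Lemma \ref{lemme-0}, obtained by interchanging the roles of the two systems (\ref{No-lin}) and (\ref{No-lin2}), and then to reduce it to a single application of Lemma \ref{bounded-Q}. First I would freeze the parameters $(m,\nu)$, treat $y(n,m,\nu)$ as a known bounded sequence, and rewrite (\ref{auxiliary3}) in the form of (\ref{auxiliary-Q}) by setting
\begin{displaymath}
q(n,z_{n})=f(n,z_{n}+y(n,m,\nu))-g(n,y(n,m,\nu)).
\end{displaymath}
The task then becomes verifying for this $q$ the two structural hypotheses (\ref{Cota-Q})--(\ref{CTQ}) that Lemma \ref{bounded-Q} requires.

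For the boundedness estimate I would use the triangle inequality together with \textbf{(H1)} to obtain $|q(n,z)|\leq|f(n,z+y(n,m,\nu))|+|g(n,y(n,m,\nu))|\leq F_{n}+G_{n}$, so that $Q_{n}=F_{n}+G_{n}$ is an admissible bound. The Lipschitz estimate is the one point deserving care: since the term $g(n,y(n,m,\nu))$ does not depend on $z$, only the $f$-term contributes to the difference, and \textbf{(A2)} yields $|q(n,z)-q(n,\tilde{z})|=|f(n,z+y(n,m,\nu))-f(n,\tilde{z}+y(n,m,\nu))|\leq r_{n}|z-\tilde{z}|$ with exactly the same sequence $r_{n}$ as in Lemma \ref{lemme-0}. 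This is what keeps the Lipschitz constant equal to $r_{n}$ rather than forcing a larger one.

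With $Q_{n}=F_{n}+G_{n}$ and Lipschitz sequence $r_{n}$ in hand, I would invoke \textbf{(H2)} and \textbf{(H3)} to check the remaining conditions (\ref{CTQ}): \textbf{(H2)} gives $N(n,Q)=N(n,F+G)\leq B$, so one may take $\tilde{B}=B$, and \textbf{(H3)} gives $N(n,r)\leq\theta<1$. Lemma \ref{bounded-Q} then delivers a unique bounded solution of (\ref{auxiliary3}), which I would denote $\vartheta(n;(m,\nu))$; its closed form is the fixed-point expression
\begin{displaymath}
\vartheta(n;(m,\nu))=\sum\limits_{k=-\infty}^{+\infty}G(n,k+1)\{f(k,\vartheta(k;(m,\nu))+y(k,m,\nu))-g(k,y(k,m,\nu))\},
\end{displaymath}
and the bound $|\vartheta(n;(m,\nu))|_{\infty}\leq\tilde{B}=B$ is precisely the one produced by the existence argument of Lemma \ref{bounded-Q}.

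Since the whole argument is a transcription of Lemma \ref{lemme-0} with $f$ and $g$ exchanged and $x(n,m,\xi)$ replaced by $y(n,m,\nu)$, I do not expect a genuine obstacle. The only place to stay alert is the Lipschitz bookkeeping described above: one must confirm that swapping the roles of the two systems does not alter the Lipschitz sequence, so that \textbf{(H3)} still applies verbatim and the contraction constant $\theta$ is unchanged.
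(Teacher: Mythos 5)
Your proposal is correct and follows exactly the paper's own argument: the paper likewise proves Lemma \ref{lemme-00} by applying Lemma \ref{bounded-Q} with $q(n,z_{n})=f(n,z_{n}+y(n,m,\nu))-g(n,y(n,m,\nu))$ under \textbf{(H1)}--\textbf{(H3)}, arriving at the same fixed-point representation (\ref{cauchy3}). Your explicit verification of the bound $Q_{n}=F_{n}+G_{n}$ and of the Lipschitz sequence $r_{n}$ (noting that only the $f$-term depends on $z$) simply spells out details the paper leaves implicit with the phrase ``as before.''
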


\begin{proof}
As before, by using \textbf{(H1)}--\textbf{(H3)} and Lemma \ref{bounded-Q} with $q(n,z_{n})=f(n,z_{n}+y(n,m,\nu))-g(n,y(n,m,\nu))$, the unique
bounded solution of (\ref{auxiliary3}) is
\begin{equation}
\label{cauchy3}
\vartheta(n;(m,\nu))=\sum\limits_{k=-\infty}^{+\infty}G(n,k+1)\{f(k,\vartheta(k;(m,\nu))+y_{k,m}(\nu))-g(k,y_{k,m}(\nu))\}, 
\end{equation}
where $y_{k,m}(\nu)=y(k,m,\nu)$.
\end{proof}

\begin{remark}
\label{PIU}
By uniqueness of the solution of (\ref{No-lin}), we know that $x(n,n,x(n,m,\xi))=x(n,m,\xi)$, which implies that
(\ref{auxiliary2}) is similar to
\begin{displaymath}
w_{n+1}=A_{n}w_{n}-f(n,x(n,n,x(n,m,\xi)))+g(n,w_{n}+x(n,n,x(n,m,\xi)))
\end{displaymath}
and Lemma \ref{lemme-0} implies that
\begin{equation}
\label{identidad-fun}
\chi(n;(m,\xi))=\chi(n;(n,x(n,m,\xi))).
\end{equation}

In a similar way, it can be proved that
\begin{equation}
\label{identidad-fun2}
\vartheta(n;(m,\nu))=\vartheta(n;(n,y(n,m,\nu))).
\end{equation}
\end{remark}

\begin{lemma}
\label{lemme-1}
Suppose that \textnormal{(\ref{Lin})} has a generalized exponential dichotomy. If the systems
\textnormal{(\ref{No-lin})}--\textnormal{(\ref{No-lin2})} satisfy \textnormal{\textbf{(H1)}--\textbf{(H3)}}, then 
there exists a unique map $H\colon \mathbb{Z}\times \mathbb{R}^{d}\to \mathbb{R}^{d}$, which verifies the following properties
\begin{itemize}
 \item[a)] $H(n,\xi)-\xi$ is bounded for any fixed $n\in \mathbb{Z}$ and $\xi\in \mathbb{R}^{d}$.
 \item[b)] If $x_{n}=x(n,m,\xi)$ is solution of \textnormal{(\ref{No-lin})}, then $H[n,x_{n}]$ is solution 
 of \textnormal{(\ref{No-lin2})}.
\end{itemize}
\end{lemma}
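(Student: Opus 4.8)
The plan is to construct $H$ explicitly from the bounded corrector $\chi$ produced by Lemma \ref{lemme-0}. Recall that $\chi(n;(m,\xi))$ is the unique bounded solution of the auxiliary system (\ref{auxiliary2}) attached to the trajectory $x(\cdot,m,\xi)$ of (\ref{No-lin}). I would set
\begin{equation*}
H(n,u)=u+\chi(n;(n,u)),
\end{equation*}
so that at the point $(n,u)$ one solves (\ref{No-lin}) from $u$ at time $n$ and adds to $u$ the value at time $n$ of the associated bounded corrector. The observation that makes this a legitimate pointwise map is the identity (\ref{identidad-fun}) of Remark \ref{PIU}: because $\chi(n;(m,\xi))=\chi(n;(n,x(n,m,\xi)))$, the corrector at time $n$ depends on the initial data $(m,\xi)$ only through the current state $x(n,m,\xi)$.

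Granting this definition, property a) would follow at once, since $H(n,\xi)-\xi=\chi(n;(n,\xi))$ and Lemma \ref{lemme-0} bounds $|\chi(n;(n,\xi))|$ by $B$ uniformly in $n$ and $\xi$. For property b), I would take a solution $x_{n}=x(n,m,\xi)$ of (\ref{No-lin}), write $w_{n}=\chi(n;(m,\xi))$, and use (\ref{identidad-fun}) to get $H[n,x_{n}]=x_{n}+w_{n}$. Substituting the equations satisfied by $x_{n}$ and $w_{n}$ then yields
\begin{equation*}
x_{n+1}+w_{n+1}=A_{n}(x_{n}+w_{n})+g(n,x_{n}+w_{n}),
\end{equation*}
which is precisely the statement that $y_{n}=H[n,x_{n}]$ solves (\ref{No-lin2}).

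For uniqueness I would argue by reversing this substitution. If $\tilde{H}$ is another map with properties a) and b), then for a fixed solution $x_{n}=x(n,m,\xi)$ the differences $w_{n}=H[n,x_{n}]-x_{n}$ and $\tilde{w}_{n}=\tilde{H}[n,x_{n}]-x_{n}$ are both bounded and, since $H[n,x_{n}]$ and $\tilde{H}[n,x_{n}]$ solve (\ref{No-lin2}), both satisfy the auxiliary equation (\ref{auxiliary2}). The uniqueness of the bounded solution granted by Lemma \ref{lemme-0} forces $w_{n}=\tilde{w}_{n}$ for every $n$, and since every $u\in\mathbb{R}^{d}$ equals $x(n,n,u)$, this gives $H(n,u)=\tilde{H}(n,u)$ on all of $\mathbb{Z}\times\mathbb{R}^{d}$.

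I expect the only delicate point to be conceptual rather than computational: the substitutions are routine, but the whole argument rests on being able to read the corrector $\chi$ as a function of the current state alone, which is exactly what the evolution identity (\ref{identidad-fun}) guarantees. This is what collapses the trajectory-indexed family of bounded solutions into a single well-defined transformation $H(n,\cdot)$, and it is the step I would check most carefully.
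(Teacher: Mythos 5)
Your proposal is correct and follows essentially the same route as the paper: the same definition $H(n,u)=u+\chi(n;(n,u))$, the same use of the identity (\ref{identidad-fun}) to read the corrector as a function of the current state, the bound $B$ from Lemma \ref{lemme-0} for property a), the substitution argument for property b) (which the paper leaves to the reader), and uniqueness via the uniqueness of bounded solutions of (\ref{auxiliary2}). Your explicit remark that every $u\in\mathbb{R}^{d}$ equals $x(n,n,u)$, turning trajectory-wise uniqueness into pointwise equality of the maps, is a detail the paper glosses over but is a welcome clarification.
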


\begin{proof} The proof will be divided in two steps:

\noindent\textit{Step i: Existence of $H$.} We will prove that 
\begin{displaymath}
H(n,\xi)=\xi+\chi(n;(n,\xi)) 
\end{displaymath}
satisfy properties a) and b).

Indeed, by using (\ref{cauchy2}) combined with \textbf{(H1)--(H2)}, we obtain that $|H(n,\xi)-\xi|\leq B$. On the other hand,
we replace  $(n,\xi)$ by $\big(n,x(n,m,\xi)\big)$ and (\ref{identidad-fun}) implies
\begin{displaymath}
\begin{array}{rcl}
H[n,x(n,m,\xi)]&=&x(n,m,\xi)+\chi\big(n;(n,x(n,m,\xi))\big)\\\\
               &=&x(n,m,\xi)+\chi\big(n;(m,\xi)\big)
\end{array}               
\end{displaymath}
and the reader can verify easily that $H[n,x(n,m,\xi)]$ is solution of (\ref{No-lin2}) since $n\mapsto \chi\big(n;(m,\xi)\big)$
is solution of (\ref{auxiliary2}).

\noindent\textit{Step ii: Uniqueness of $H$.} Let $\tilde{H}$ be another map satisfying a) and b). Let us
observe that $u_{n}=\tilde{H}[n,x_{n}]-x_{n}$ is also a bounded solution of (\ref{auxiliary2}), which implies
by Lemma \ref{lemme-0} that $\tilde{H}[n,x_{n}]-x_{n}=\chi\big(n;(m,\xi)\big)$ and the uniqueness follows.
\end{proof}

\begin{lemma}
\label{lemme-2}
Suppose that \textnormal{(\ref{Lin})} has a generalized exponential dichotomy. If the systems
\textnormal{(\ref{No-lin})}--\textnormal{(\ref{No-lin2})} satisfy \textnormal{\textbf{(H1)}--\textbf{(H3)}}, then 
there exists a unique map $L\colon \mathbb{Z}\times \mathbb{R}^{d}\to \mathbb{R}^{d}$, which verifies the following properties
\begin{itemize}
 \item[a)] $L(n,\nu)-\nu$ is bounded for any fixed $n\in \mathbb{Z}$ and $\nu\in \mathbb{R}^{d}$.
 \item[b)] If $y_{n}=y(n,m,\nu)$ is solution of \textnormal{(\ref{No-lin2})}, then $L[n,y_{n}]$ is solution 
 of \textnormal{(\ref{No-lin})}.
\end{itemize}
\end{lemma}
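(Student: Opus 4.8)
The plan is to mirror the proof of Lemma \ref{lemme-1} step by step, interchanging the roles of the systems (\ref{No-lin}) and (\ref{No-lin2}) and replacing the auxiliary function $\chi$ furnished by Lemma \ref{lemme-0} with the function $\vartheta$ furnished by Lemma \ref{lemme-00}. Concretely, I would propose the candidate
\[
L(n,\nu)=\nu+\vartheta(n;(n,\nu)),
\]
and then verify properties a) and b), closing with a uniqueness argument. The proof splits, as in Lemma \ref{lemme-1}, into an existence step and a uniqueness step.

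For property a), I would invoke the representation (\ref{cauchy3}) of $\vartheta(n;(m,\nu))$ together with \textbf{(H1)}--\textbf{(H2)}. Exactly as in Lemma \ref{lemme-00}, these hypotheses give the bound $|\vartheta(n;(m,\nu))|_{\infty}\leq B$, and specializing $(m,\nu)$ to $(n,\nu)$ yields $|L(n,\nu)-\nu|\leq B$ uniformly in $(n,\nu)$, which is precisely property a).

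For property b), I would substitute $(n,\nu)$ by $(n,y(n,m,\nu))$ and apply the shift identity (\ref{identidad-fun2}), obtaining
\[
L[n,y(n,m,\nu)]=y(n,m,\nu)+\vartheta(n;(n,y(n,m,\nu)))=y(n,m,\nu)+\vartheta(n;(m,\nu)).
\]
Writing $y_{n}=y(n,m,\nu)$ and $z_{n}=\vartheta(n;(m,\nu))$, the only genuine computation is to check that $w_{n}=y_{n}+z_{n}$ solves (\ref{No-lin}). Since $y_{n}$ solves (\ref{No-lin2}) and $z_{n}$ solves the auxiliary system (\ref{auxiliary3}), adding the two recursions makes the terms $+g(n,y_{n})$ and $-g(n,y_{n})$ cancel, leaving $w_{n+1}=A_{n}w_{n}+f(n,w_{n})$; hence $L[n,y_{n}]$ is a solution of (\ref{No-lin}).

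Finally, for uniqueness I would argue as in Step ii of Lemma \ref{lemme-1}: if $\tilde{L}$ is another map satisfying a) and b), then $v_{n}=\tilde{L}[n,y_{n}]-y_{n}$ is a bounded solution of (\ref{auxiliary3}), so Lemma \ref{lemme-00} forces $v_{n}=\vartheta(n;(m,\nu))$, that is $\tilde{L}=L$. Because the whole argument is the mirror image of Lemma \ref{lemme-1}, I do not expect any substantial obstacle; the single point requiring care is the sign bookkeeping in the cancellation of property b), where one must track the sign of the $g$-term in (\ref{auxiliary3}) so that the surviving nonlinearity is exactly $f$ rather than $g$, confirming that the transported sequence solves (\ref{No-lin}) and not (\ref{No-lin2}).
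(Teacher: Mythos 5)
Your proposal is correct and is essentially the paper's own proof: the paper defines $L(n,\nu)=\nu+\vartheta(n;(n,\nu))$ and simply states that the argument proceeds analogously to Lemma \ref{lemme-1}, which is exactly the mirror-image argument you carried out (including the use of (\ref{identidad-fun2}), the cancellation of the $g$-terms when adding the recursions of (\ref{No-lin2}) and (\ref{auxiliary3}), and the uniqueness via Lemma \ref{lemme-00}). Your explicit sign bookkeeping in property b) is precisely the detail the paper leaves implicit, and it checks out.
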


\begin{proof}
It can be proved analogously as the previous result that the map
\begin{displaymath}
L(n,\nu)=\nu+\vartheta(n;(n,\nu)) 
\end{displaymath}
is the unique satisfying properties a) and b). 
\end{proof}

\begin{remark}
\label{reescritura}
By Lemma \ref{lemme-1}
combined with (\ref{cauchy2}), we know that $H[n,x(n,m,\xi)]$ can be written as follows:
\begin{equation}
\label{hom1}
\begin{array}{rcl}
H[n,x(n,m,\xi)]&=&\sum\limits_{k=-\infty}^{+\infty}G(n,k+1)g(k,H[k,x(k,m,\xi)])\\\\
&&-\sum\limits_{k=-\infty}^{+\infty}G(n,k+1)f(k,x(k,m,\xi))+x(n,m,\xi).
\end{array}
\end{equation}

Similarly, by Lemma \ref{lemme-2} combined with (\ref{cauchy3}), we know that:
\begin{equation}
\label{hom2}
\begin{array}{rcl}
L[n,y(n,m,\nu)]&=&\sum\limits_{k=-\infty}^{+\infty}G(n,k+1)f(k,L[k,y(k,m,\nu)])\\\\
&&-\sum\limits_{k=-\infty}^{+\infty}G(n,k+1)g(k,y(k,m,\nu))+y(n,m,\nu).
\end{array}
\end{equation}
\end{remark}

\begin{lemma}
For any solution $x(n,m,\xi)$ of \textnormal{(\ref{No-lin})} and $y(n,m,\nu)$ of \textnormal{(\ref{No-lin2})} and
any $n\in \mathbb{Z}$, it follows that
\begin{displaymath}
L[n,H[n,x(n,m,\xi)]]=x(n,m,\xi) \quad \textnormal{and} \quad H[n,L[n,y(n,m,\nu)]]=y(n,m,\nu).
\end{displaymath}
\end{lemma}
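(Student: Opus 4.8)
The plan is to prove the first identity $L[n,H[n,x(n,m,\xi)]]=x(n,m,\xi)$; the second one then follows by the entirely symmetric argument that exchanges the roles of $f$ and $g$, of $x$ and $y$, and of $H$ and $L$. First I would record that both sequences in question are solutions of the \emph{same} system. Write $x_{n}=x(n,m,\xi)$. By Lemma \ref{lemme-1}, $H[n,x_{n}]$ is a solution of (\ref{No-lin2}); applying Lemma \ref{lemme-2} to this solution, $L[n,H[n,x_{n}]]$ is a solution of (\ref{No-lin}). Denote $p_{n}=L[n,H[n,x_{n}]]$ and set $u_{n}=p_{n}-x_{n}$, so that $u_{n}$ is the difference of two solutions of (\ref{No-lin}).

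Next I would establish that $u_{n}$ is bounded. From the construction of $H$ in Lemma \ref{lemme-1} together with (\ref{identidad-fun}) one has $H[n,x_{n}]-x_{n}=\chi(n;(m,\xi))$, which is bounded by $B$; and since $H[n,x_{n}]$ is a solution of (\ref{No-lin2}), Lemma \ref{lemme-2} gives that $p_{n}-H[n,x_{n}]$ equals some $\vartheta(n;\cdot)$, also bounded by $B$. Hence $|u_{n}|\leq 2B$ for every $n\in \mathbb{Z}$.

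The central step is to show that $u_{n}$ satisfies a linear-type equation and then to run a contraction estimate. Subtracting $x_{n+1}=A_{n}x_{n}+f(n,x_{n})$ from $p_{n+1}=A_{n}p_{n}+f(n,p_{n})$ yields
\begin{displaymath}
u_{n+1}=A_{n}u_{n}+h_{n}, \qquad h_{n}:=f(n,p_{n})-f(n,x_{n}),
\end{displaymath}
where by \textbf{(A2)} one has $|h_{k}|\leq r_{k}|u_{k}|\leq r_{k}\,\|u\|_{\infty}$. In particular \textbf{(H3)} gives $N(n,|h|)\leq \|u\|_{\infty}\,N(n,r)\leq \theta\,\|u\|_{\infty}<+\infty$, so the sequence $h_{n}$ fulfils the hypothesis \textbf{(E1)} of Lemma \ref{bounded}. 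Since $u_{n}$ is a \emph{bounded} solution of $u_{n+1}=A_{n}u_{n}+h_{n}$, the uniqueness clause of Lemma \ref{bounded} forces $u_{n}=\sum_{k=-\infty}^{\infty}G(n,k+1)h_{k}$. Estimating this sum with (\ref{GDD-def}) and $|h_{k}|\leq r_{k}\,\|u\|_{\infty}$ produces $|u_{n}|\leq N(n,r)\,\|u\|_{\infty}\leq \theta\,\|u\|_{\infty}$; taking the supremum over $n$ gives $\|u\|_{\infty}\leq \theta\,\|u\|_{\infty}$, and since $\theta\in (0,1)$ we conclude $\|u\|_{\infty}=0$, i.e. $p_{n}=x_{n}$ for all $n$.

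I expect the main obstacle to be the bookkeeping required to legitimately invoke Lemma \ref{bounded} for $u_{n}$: one must check that $h_{n}$, although a priori built from both solutions, is a genuinely fixed sequence meeting \textbf{(E1)}, and that the uniqueness part of Lemma \ref{bounded} really identifies $u_{n}$ with the Green-function series rather than merely bounding it. Once that representation is secured, the contraction estimate via \textbf{(H3)} is routine, and the identity $H[n,L[n,y(n,m,\nu)]]=y(n,m,\nu)$ is obtained verbatim after the symmetric substitution $f\leftrightarrow g$, $x\leftrightarrow y$, $H\leftrightarrow L$ (now using \textbf{(A2)} for $g$ and the same bound $N(n,r)\leq \theta$).
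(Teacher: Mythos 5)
Your proof is correct, and it reaches the paper's conclusion by a slightly different route. The paper works from the explicit fixed--point representations of the two maps: it writes $V[n,x_{n}]=L[n,H[n,x_{n}]]$ via (\ref{hom2}), substitutes (\ref{hom1}) for $H[n,x_{n}]$, and observes that the two $g$--sums cancel, leaving exactly the Green--function identity
\begin{displaymath}
V[n,x_{n}]-x_{n}=\sum\limits_{k=-\infty}^{+\infty}G(n,k+1)\{f(k,V[k,x_{k}])-f(k,x_{k})\},
\end{displaymath}
after which it runs the same contraction estimate as you, using \textbf{(A2)} and \textbf{(H3)}. You obtain the identical identity structurally instead: since $p_{n}=L[n,H[n,x_{n}]]$ and $x_{n}$ both solve (\ref{No-lin}), their difference $u_{n}$ solves the linear system $u_{n+1}=A_{n}u_{n}+h_{n}$ with the fixed forcing $h_{n}=f(n,p_{n})-f(n,x_{n})$, which satisfies \textbf{(E1)}; the uniqueness clause of Lemma \ref{bounded} then forces $u_{n}=\sum_{k}G(n,k+1)h_{k}$. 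Your route buys two things: it avoids the bookkeeping of the double sums and the $g$--cancellation entirely (the cancellation is automatic once you subtract the two copies of (\ref{No-lin})), and it makes explicit the bound $|u_{n}|\leq 2B$ (via $|\chi|\leq B$ and $|\vartheta|\leq B$), which the paper uses tacitly when it passes to the norm $|\cdot|_{\infty}$ in the final contraction step. The paper's route, in turn, stays entirely inside the representations of Remark \ref{reescritura} and displays concretely how the composition $L\circ H$ collapses, without re--invoking Lemma \ref{bounded}. Both arguments finish identically with $\|u\|_{\infty}\leq\theta\|u\|_{\infty}$, $\theta\in(0,1)$, and both dispatch the second identity $H[n,L[n,y(n,m,\nu)]]=y(n,m,\nu)$ by the symmetric exchange $f\leftrightarrow g$, $H\leftrightarrow L$.
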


\begin{proof}
By Lemma \ref{lemme-1}
and Remark \ref{reescritura}, we know that (\ref{hom1}) is solution of (\ref{No-lin2}). Now, by Lemma \ref{lemme-2}, we also 
know that $L[n,H[n,x_{n}(\xi)]]$ is a solution of (\ref{No-lin}) that can be written as follows:
\begin{displaymath}
\begin{array}{rcl}
L[n,H[n,(n,m,\xi)]]&=&V[n,x(n,m,\xi)]\\\\
&=&\sum\limits_{k=-\infty}^{+\infty}G(n,k+1)f(k,V[k,x(k,m,\xi)])\\\\
&&-\sum\limits_{k=-\infty}^{+\infty}G(n,k+1)g(k,H[k,x(k,m,\xi)])+H[n,x(n,m,\xi)].
\end{array}
\end{displaymath}

Now, by using (\ref{hom1}) combined with \textbf{(A2)}, we can deduce that
\begin{displaymath}
\begin{array}{rcl}
|V[n,x(n,m,\xi)]-x(n,m,\xi)|&\leq &\sum\limits_{k=-\infty}^{+\infty}|G(n,k+1)|\\\\
&& |f(k,V[k,x(k,m,\xi)])-f(k,x(k,m,\xi))|\\\\
&\leq & \sum\limits_{k=-\infty}^{+\infty}|G(n,k+1)|r_{k}|V[k,x(k,m,\xi)]-x(k,m,\xi)|\\\\
\end{array}
\end{displaymath}
and \textbf{(H3)} implies that
$$
|L[n,H[n,x(n,m,\xi)]]-x(n,m,\xi)|_{\infty}\leq \theta |L[n,H[n,x(n,m,\xi)]]-x(n,m,\xi)|_{\infty},
$$
with $\theta\in (0,1)$, which is equivalent to 
\begin{equation}
\label{Hom-L}
L[n,H[n,x(n,m,\xi)]]=x(n,m,\xi).
\end{equation}

In a similar way, the reader can verify that
\begin{equation}
\label{Hom-H}
H[n,L[n,y(n,m,\nu)]]=y(n,m,\nu).
\end{equation}

\end{proof}

\begin{remark}
\label{almost}
The maps $\xi \mapsto H(n,\xi)$  and $\nu \mapsto L(n,\nu)$ defined by
\begin{displaymath}
\begin{array}{rcl}
H(n,\xi)&=&\xi + \chi(n;(n,\xi)) \\\\
&=&\xi+\hspace{-0.05cm}\sum\limits_{k=-\infty}^{+\infty}G(n,k+1)\{g(k,\chi(k;(n,\xi))+x_{k,n}(\xi))-f(k,x_{k,n}(\xi))\}, 
\end{array}
\end{displaymath}
and
\begin{displaymath}
\begin{array}{rcl}
L(n,\nu)&=&\nu + \vartheta(n;(n,\nu)) \\\\
&=&\nu+\hspace{-0.05cm}\sum\limits_{k=-\infty}^{+\infty}G(n,k+1)\{f(k,\vartheta(k;(n,\nu))+y_{k,n}(\nu))-g(k,y_{k,n}(\nu))\}, 
\end{array}
\end{displaymath}
satisfy properties (ii) and (iii) from Definition \ref{TopEq}, which is consequence
of Lemmas \ref{lemme-1} and \ref{lemme-2}. In order to verify property (i), notice that if $n=m$ in the identities 
(\ref{Hom-L})--(\ref{Hom-H}), we obtain that
\begin{displaymath}
L(n,H(n,\xi))=\xi \quad \textnormal{and} \quad H(n,L(n,\nu))=\nu 
\end{displaymath}
for any fixed $n\in \mathbb{Z}$. These identities ensure that $H^{-1}(n,\cdot)=L(n,\cdot)$ for any fixed
$n$. However, the continuity of both maps must be proved. In order to do that, we will follow the approach 
developed by Shi and Xiong \cite{Shi} and Jiang \cite{Jiang} in a continuous framework.
\end{remark}

\section{Proof of Main results}
As stated above, we will prove the continuity properties of the maps $\xi \mapsto H(n,\xi)$
and $\nu \mapsto L(n,\nu)$, for any fixed $n\in \mathbb{Z}$. The proof of Theorem \ref{eq-top-f-g}
follow critically the lines of Jiang \cite[Theorem 2]{Jiang} while the proof of Theorem \ref{eq-top-f-g2} is inspired in Shi and Xiong  \cite[Lemma 10]{Shi}.

\begin{lemma}
\label{gronwall-on}
Let $n\mapsto x(n,k,\xi)$ (resp. $n\mapsto x(n,k,\xi')$) the solution of \textnormal{(\ref{No-lin})} passing through $\xi$ (resp. $\xi'$) 
at $n=k$. Then, it follows that
\begin{equation}
\label{gronwall1}
|x(n,k,\xi)-x(n,k,\xi')|\leq |\xi-\xi'|\exp\Big(\sum\limits_{p=k}^{n-1}(||A_{p}-I||+r_{p})\Big) \quad \textnormal{if} \quad n>k
\end{equation}
and
\begin{equation}
\label{gronwall2}
|x(n,k,\xi)-x(n,k,\xi')|\leq |\xi-\xi'|\exp\Big(\sum\limits_{p=n}^{k-1}(||A_{p}-I||+r_{p})\Big) \quad \textnormal{if} \quad n<k.
\end{equation}
\end{lemma}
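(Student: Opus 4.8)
The plan is to reduce both inequalities to a single one-step recursion for the difference of the two solutions and then to iterate it, converting the resulting product into an exponential via the elementary bound $1+t\le e^{t}$, valid for $t\ge 0$.

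First I would set $\delta_{n}=x(n,k,\xi)-x(n,k,\xi')$. Since both sequences solve \textnormal{(\ref{No-lin})}, subtracting the two copies of the equation gives
\[
\delta_{n+1}=A_{n}\delta_{n}+\big[f(n,x(n,k,\xi))-f(n,x(n,k,\xi'))\big].
\]
Writing $A_{n}=I+(A_{n}-I)$ and taking the vector norm, hypothesis \textbf{(A2)} (the Lipschitz bound with sequence $r_{n}$) yields the key one-step estimate
\[
|\delta_{n+1}|\le\big(1+\|A_{n}-I\|+r_{n}\big)\,|\delta_{n}|.
\]

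For the forward inequality \textnormal{(\ref{gronwall1})} (the case $n>k$) I would simply iterate this estimate from $p=k$ up to $p=n-1$, obtaining the telescoping product $|\delta_{n}|\le|\delta_{k}|\prod_{p=k}^{n-1}(1+\|A_{p}-I\|+r_{p})$. Since $\delta_{k}=\xi-\xi'$ and each factor satisfies $1+t\le e^{t}$, the product is dominated by $\exp\big(\sum_{p=k}^{n-1}(\|A_{p}-I\|+r_{p})\big)$, which is exactly \textnormal{(\ref{gronwall1})}.

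The backward inequality \textnormal{(\ref{gronwall2})} (the case $n<k$) is the step I expect to be the main obstacle, because the forward recursion cannot be inverted by a mere rearrangement without replacing each factor $1+\|A_{p}-I\|+r_{p}$ by $1/(1-\|A_{p}-I\|-r_{p})$, which destroys the clean exponential form. Here I would instead exploit the nonsingularity of $A_{n}$ granted by \textbf{(A1)} to rewrite the subtracted equation as $\delta_{n}=A_{n}^{-1}\big(\delta_{n+1}-[f(n,x(n,k,\xi))-f(n,x(n,k,\xi'))]\big)$, producing a one-step estimate that propagates $|\delta_{n+1}|$ back to $|\delta_{n}|$; iterating it from $p=k-1$ down to $p=n$ and again applying $1+t\le e^{t}$ gives the exponential with the index range $p=n,\dots,k-1$ appearing in \textnormal{(\ref{gronwall2})}. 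The delicate point is controlling this backward step so that the exponent is expressed through $\|A_{p}-I\|+r_{p}$ rather than through $\|A_{p}^{-1}\|$, and it is precisely here that the boundedness and invertibility packaged in \textbf{(A1)} are needed; the remainder of the argument is a routine transcription of the forward computation run in reverse time.
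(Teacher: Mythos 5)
Your argument for the forward inequality (\ref{gronwall1}) is correct and is essentially the paper's own proof in a different dress: with your notation $\delta_{n}=x(n,k,\xi)-x(n,k,\xi')$, the paper telescopes the equation into the summation inequality $|\delta_{n}|\leq|\xi-\xi'|+\sum_{p=k}^{n-1}(\|A_{p}-I\|+r_{p})|\delta_{p}|$ and cites the discrete Gronwall inequality, whose proof is exactly your iteration of $|\delta_{p+1}|\leq(1+\|A_{p}-I\|+r_{p})|\delta_{p}|$ followed by $1+t\leq e^{t}$.

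The backward case, however, contains a genuine gap, located exactly at the step you call delicate and then defer. In the backward recursion $\delta_{p}=A_{p}^{-1}\bigl(\delta_{p+1}-[f(p,x(p,k,\xi))-f(p,x(p,k,\xi'))]\bigr)$ the Lipschitz term is evaluated at time $p$, not $p+1$, so the one-step estimate is implicit, $|\delta_{p}|\leq\|A_{p}^{-1}\|\bigl(|\delta_{p+1}|+r_{p}|\delta_{p}|\bigr)$, and it can be solved for $|\delta_{p}|$ only when $\|A_{p}^{-1}\|r_{p}<1$, which \textbf{(A1)}--\textbf{(A2)} do not guarantee. Worse, the surviving factor is governed by $\|A_{p}^{-1}\|$, and no argument can trade it for $e^{\|A_{p}-I\|+r_{p}}$, because the inequality (\ref{gronwall2}) is simply false under \textbf{(A1)}--\textbf{(A2)}: take $d=1$, $A_{n}\equiv 1/2$, $f\equiv 0$ (so $M=1/2$, $r_{n}=0$, and \textbf{(A1)} holds). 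Then $x(n,k,\xi)=2^{k-n}\xi$ for $n<k$, hence $|x(n,k,\xi)-x(n,k,\xi')|=2^{k-n}|\xi-\xi'|$, while the right-hand side of (\ref{gronwall2}) equals $e^{(k-n)/2}|\xi-\xi'|$; since $2>e^{1/2}$, the claimed bound fails for every $n<k$. So the ``routine transcription'' you promise cannot exist, and the boundedness and invertibility in \textbf{(A1)} do not rescue it (they do not even bound $\|A_{p}^{-1}\|$ uniformly unless $M<1$, and even then not by the required quantity). For what it is worth, the paper itself proves only the case $n>k$ and dismisses the other as ``similar'', so your instinct that the backward direction is the real obstacle is sound; what one can actually prove there, assuming additionally $\|A_{p}^{-1}\|r_{p}<1$, is $|\delta_{n}|\leq|\xi-\xi'|\prod_{p=n}^{k-1}\|A_{p}^{-1}\|/(1-\|A_{p}^{-1}\|r_{p})$, an $A^{-1}$-dependent bound weaker than (\ref{gronwall2}) but still finite, and finiteness of the resulting constant is all that the later use of this lemma (the term $\Gamma(n,\ell)$ in the proof of Theorem \ref{eq-top-f-g}) actually requires.
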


\begin{proof}
We will prove only the case $n>k$, the other one can be done similarly. It is straightforward to see that
\begin{displaymath}
x(n,k,\xi)=\xi+\sum\limits_{p=k}^{n-1}(A_{p}-I)x(p,k,\xi)+f(p,x(p,k,\xi)). 
\end{displaymath}

By using \textbf{(A2)}, we have 
\begin{displaymath}
|x(n,k,\xi)-x(n,k,\xi')| \leq |\xi-\xi'|+\sum\limits_{p=k}^{n-1}(||A_{p}-I||+r_{p})|x(p,k,\xi)-x(p,k,\xi')|. 
\end{displaymath}

Finally, by the discrete Gronwall's inequality (see \emph{e.g.}, \cite[Lemma 4.32]{Elaydi}), we have (\ref{gronwall1}).

\end{proof}

\subsection{Proof of Theorem \ref{eq-top-f-g}}
 We will give the proof (in three steps) only for the map $H$ since the other one can be done analogously.

\noindent\emph{Step 1: Preliminary facts.} 
As the identity is a continuous map, we only need to prove that the map $\xi \mapsto \chi(n;(n,\xi))$ is continuous
for any fixed $n$. Now, let us recall that $n\mapsto \chi(n;(m,\xi))$ is the unique bounded solution of (\ref{auxiliary2}), which can be obtained
as the limit of the succesive approximations as done in Lemma \ref{bounded-Q}:
\begin{displaymath}
\chi_{j+1}(n;(m,\xi))=\sum\limits_{k=-\infty}^{+\infty}\hspace{-0.2cm}G(n,k+1)\{g(k,\chi_{j}(k;(m,\xi))+x_{k,m}(\xi))-f(k,x_{k,m}(\xi))\}, 
\end{displaymath}
such that
\begin{displaymath}
\lim\limits_{j\to +\infty}\chi_{j}(n;(m,\xi))=\chi(n;(m,\xi)),
\end{displaymath}
uniformly on $\mathbb{Z}$, which implies that, for any $\varepsilon>0$, there exists $J(\varepsilon)\in \mathbb{N}$ such that
\begin{equation}
\label{CVU}
|\chi(n;(n,\xi))-\chi_{j}(n;(n,\xi))|<\frac{1}{3}\varepsilon \quad \textnormal{for any} \quad j>J.
\end{equation}

On the other hand, by \textbf{(H4)}, we know that for any $\varepsilon>0$, there exists $\ell(\varepsilon)>1$ such that
\begin{equation}
\label{estimacion1}
 \sum\limits_{k=-\infty}^{n-1-\ell}K\exp\Big(-\sum\limits_{p=k+1}^{n}a_{p}\Big)\Delta_{k}+
\sum\limits_{k=n+\ell}^{\infty}K\exp\Big(-\sum\limits_{p=n}^{k+1}a_{p}\Big)\Delta_{k}<\frac{\varepsilon}{2}\big(1-\frac{\theta}{3}\big),
\end{equation}
where $\Delta_{k}$ is defined by
\begin{displaymath}
\begin{array}{rcl}
\Delta_{k}&=&g\big(k,\chi(n;(n,\xi)+x_{k,n}(\xi)\big)-g\big(k,\chi(n;(n,\xi')+x_{k,n}(\xi')\big)\\\\
&&\hspace{-0.25cm}+\hspace{0.2cm}f\big(k,x_{k,n}(\xi')\big)-f\big(k,x_{k,n}(\xi)\big) .
\end{array}
\end{displaymath}

\noindent\emph{Step 2: Claim.} Given $\ell(\varepsilon)\in \mathbb{N}$ defined in (\ref{estimacion1}). For any $j$, there exists $\delta_{j}(\varepsilon,\ell,n)>0$ such that
\begin{equation}
\label{Induccion}
|\chi_{j}(n;(n,\xi))-\chi_{j}(n;(n,\xi'))|<\frac{1}{3}\varepsilon \quad \textnormal{if} \quad |\xi-\xi'|<\delta_{j}.
\end{equation}

\noindent\emph{Step 3: End of proof.} Finally, if $|\xi-\xi'|<\delta_{j}$ with $j>J$, then
\begin{displaymath}
\begin{array}{rcl}
|\chi(n;(n,\xi))-\chi(n;(n,\xi'))|&\leq&|\chi(n;(n,\xi))-\chi_{j}(n;(n,\xi))|\\\\
&&+|\chi_{j}(n;(n,\xi))-\chi_{j}(n;(n,\xi'))|\\\\
&&+|\chi_{j}(n;(n,\xi'))-\chi(n;(n,\xi'))|\\\\
&<&\frac{1}{3}\varepsilon+\frac{1}{3}\varepsilon+\frac{1}{3}\varepsilon=\varepsilon.
\end{array}
\end{displaymath}
and the continuity of $\xi \mapsto \xi+\chi(n;(n,\xi))$ follows.
\medskip

\noindent\emph{Proof of Claim:} The proof will be made by induction by considering an initial term
$$
\chi_{0}(n;(n,\xi))=\chi_{0}(n;(n,\xi'))=\phi \in \ell_{\infty}(\mathbb{Z}) \quad \textnormal{with} \quad |\phi|_{\infty}<B.
$$
and supposing that (\ref{Induccion}) is verified for some $j$ as inductive hypothesis. Now, we have that 
\begin{displaymath}
 \begin{array}{rcl}
\chi_{j+1}(n;(n,\xi))-\chi_{j+1}(n;(n,\xi')) &= & \sum\limits_{k=-\infty}^{\infty}G(n,k+1)\Delta_{k}(g)-\sum\limits_{k=-\infty}^{\infty}G(n,k+1)\Delta_{k}(f)\\\\
&=&\underbrace{\sum\limits_{k=-\infty}^{n-1-\ell}G(n,k+1)[\Delta_{k}(g-f)]
+\sum\limits_{k=n+\ell}^{\infty}G(n,k+1)[\Delta_{k}(g-f)]}_{=A}\\\\
& &+ \underbrace{\sum\limits_{k=n-\ell}^{n-1}G(n,k+1)\Delta_{k}(g)}_{=B_{1}}+\underbrace{\sum\limits_{k=n}^{n+\ell-1}G(n,k+1)\Delta_{k}(g)}_{=B_{2}}\\\\
& &- \underbrace{\sum\limits_{k=n-\ell}^{n-1}G(n,k+1)\Delta_{k}(f)}_{=C_{1}}-\underbrace{\sum\limits_{k=n}^{n+\ell-1}G(n,k+1)\Delta_{k}(f)}_{=C_{2}},\\\\
\end{array}
\end{displaymath}
where $\ell$ is the same as in (\ref{estimacion1}), and $\Delta_{k}(g)$, $\Delta_{k}(f)$ and $\Delta_{k}(g-f)$ are described by:
\begin{itemize}
\item[] $\Delta_{k}(g)=g(k,\chi_{j}(k;(n,\xi))+x_{k,n}(\xi))-g(k,\chi_{j}(k;(n,\xi'))+x_{k,n}(\xi'))$,\\
\item[] $\Delta_{k}(f)=f(k,x_{k,n}(\xi'))-f(k,x_{k,n}(\xi))$,\\
\item[] $\Delta_{k}(g-f)=\Delta_{k}(g)-\Delta_{k}(f)$.
\end{itemize}



By (\ref{estimacion1}), we have that 
\begin{equation}
\label{bound1}
|A|\leq \frac{\varepsilon}{2}\big(1-\frac{\theta}{3}\big).
\end{equation}



In order to estimate $|B|$, by using (\ref{GDD-def}),\textbf{(A2)}, inductive hipothesis and Lemma \ref{gronwall-on}, we can deduce:
\begin{displaymath}
\begin{array}{rcl}
|B_{1}|&\leq &\sum\limits_{k=n-\ell}^{n-1}K\exp\Big(-\sum\limits_{p=k+1}^{n}a_{p}\Big)r_{k}\{|\chi_{j}(k;(n,\xi))-\chi_{j}(k;(n,\xi'))|+|x_{k,n}(\xi)-x_{k,n}(\xi')|\}\\\\
&\leq &\sum\limits_{k=n-\ell}^{n-1}K\exp\Big(-\sum\limits_{p=k+1}^{n}a_{p}\Big)r_{k}\Big\{\frac{1}{3}\varepsilon+|\xi-\xi'|\exp\Big(\sum\limits_{l=k}^{n-1}\{||A_{l}-I||+r_{l}\}\Big)\Big\}\\\\
\end{array}
\end{displaymath}
and
\begin{displaymath}
\begin{array}{rcl}
|B_{2}|&\leq &  \sum\limits_{k=n}^{n+\ell-1} K\exp\Big(-\sum\limits_{p=n}^{k+1}a_{p}\Big)r_{k}\Big\{\frac{1}{3}\varepsilon+|\xi-\xi'|\exp\Big(\sum\limits_{l=n}^{k-1}\{||A_{l}-I||+r_{l}\}\Big)\Big\}.
\end{array}
\end{displaymath}

Analogously, we can verify that
\begin{displaymath}
\begin{array}{rcl}
|C_{1}| &\leq &\sum\limits_{k=n-\ell}^{n-1}K\exp\Big(-\sum\limits_{p=k+1}^{n}a_{p}\Big)r_{k}|x_{k,n}(\xi)-x_{k,n}(\xi')|\\\\
&\leq &|\xi-\xi'|\sum\limits_{k=n-\ell}^{n-1}K\exp\Big(-\sum\limits_{p=k+1}^{n}a_{p}\Big)r_{k}\exp\Big(\sum\limits_{l=k}^{n-1}\{||A_{l}-I||+r_{l}\}\Big).\\\\\
\end{array}
\end{displaymath}
and
\begin{displaymath}
\begin{array}{rcl}
|C_{2}| &\leq &|\xi-\xi'|\sum\limits_{k=n}^{n+\ell-1}K\exp\Big(-\sum\limits_{p=n}^{k+1}a_{p}\Big)r_{k}\exp\Big(\sum\limits_{l=n}^{k-1}\{||A_{l}-I||+r_{l}\}\Big).\\\\\
\end{array}
\end{displaymath}

By using \textbf{(H3)}, we can deduce that
\begin{displaymath}
|B_{1}|+|B_{2}|\leq  \frac{\varepsilon}{3}\theta + |\xi-\xi'|\Gamma(n,\ell) \quad \textnormal{and} \quad
|C_{1}|+|C_{2}|\leq  |\xi-\xi'|\Gamma(n,\ell),
\end{displaymath}
where $\Gamma(n,\ell)$ a finite term is defined by
\begin{displaymath}
\begin{array}{rcl}
\Gamma(n,\ell)&= &\sum\limits_{k=n-\ell}^{n-1}K\exp\Big(-\sum\limits_{p=k+1}^{n}a_{p}\Big)r_{k}\exp\Big(\sum\limits_{l=k}^{n-1}\{||A_{l}-I||+r_{l}\}\Big)\\\\
& & +
\sum\limits_{k=n}^{n+\ell-1}K\exp\Big(-\sum\limits_{p=n}^{k+1}a_{p}\Big)r_{k}\exp\Big(\sum\limits_{l=n}^{k-1}\{||A_{l}-I||+r_{l}\}\Big)
\end{array}
\end{displaymath}

Now, we can deduce that
\begin{displaymath}
 \begin{array}{rcl}
\chi_{j+1}(n;(n,\xi))-\chi_{j+1}(n;(n,\xi')) &= & |A|+|B|+|C|\\\\
&\leq  & \displaystyle \frac{\varepsilon}{2}\big(1-\frac{\theta}{3}\big)+\frac{\varepsilon}{3}\theta+2|\xi-\xi'|\Gamma(n,\ell).
\end{array}
\end{displaymath}

When choosing $\delta_{j+1}=\min\Big\{\delta_{j},\Big(1-\frac{\theta}{3}\Big)\frac{\varepsilon}{4\Gamma(n,\ell)}\Big\}$, we can see that
(\ref{Induccion}) is verified and the claim follows. $\square$

\begin{remark}
A careful examination of the inductive proof of (\ref{Induccion}) show us that $\delta_{j}$
can be dependent of $n$ since we cannot prove that $\Gamma(n,\ell)$ has an upper bound independent of $n$.
This fact has been analized in the continuous framework by Jiang \cite[p.484]{Jiang} but is not clear for us.
\end{remark}

\subsection{Proof of Corollary \ref{cor2}}
We only need to prove that $\Gamma(n,\ell)$ has an upper bound does not depend on $n$. Indeed, by \textbf{(A2)}, \textbf{(H3)}
and (\ref{Stepanov}), we can deduce that
\begin{displaymath}
\begin{array}{rcl}
\Gamma(n,\ell)& \leq &\exp\Big(\sum\limits_{l=n-\ell}^{n+\ell}\{||A_{l}-I||+r_{l}\}\Big)N(n,r)\\\\
              & \leq &\exp\Big(2\{M\ell+M_{0}\ell\}\Big)\theta    
\end{array}
\end{displaymath}
and the result follows. $\square$

\subsection{Proof of Theorem \ref{eq-top-f-g2}}
Firstly, note that the topological equivalence is a direct consequence of Theorem \ref{eq-top-f-g}. Indeed,
\textbf{(H1)} and \textbf{(H3)} are equivalent to \textbf{(D1)} and \textbf{(D2)}. On the other hand, \textbf{(H2)}
is always satisfied since 
\begin{displaymath}
N(n,F+G)\leq K(F+G)\frac{1+e^{-\alpha}}{1-e^{-\alpha}}=B.
\end{displaymath}
Finally, \textbf{(H4)}--\textbf{(H5)}
are a consequence of \textbf{(D1)--(D2)} as stated in Section 2 and all the hypotheses os Theorem \ref{eq-top-f-g}
are satisified, which implies topological equivalence.

Moreover, by following the lines of the proof of Corollary \ref{cor2}, we can deduce that $\Gamma(n,\ell)$ has
an upper bound independent of $n$, and consequently $\delta_{j}$ in (\ref{Induccion}) also. This fact allows to prove the
uniform continuity of  $\xi \mapsto H(t,\xi)$ and  $\nu \mapsto L(t,\nu)$.

Now, we will prove that the map $\xi \mapsto H(n,\xi)$ is H\"older continous for any $n\in \mathbb{Z}$. The other one can be done
in a similar way. As before, we have that
\begin{displaymath}
 \begin{array}{rcl}
|\chi(n;(n,\xi))-\chi(n;(n,\xi'))| & \leq & \sum\limits_{k=-\infty}^{\infty}G(n,k+1)|\Delta_{k}(g)|+\sum\limits_{k=-\infty}^{\infty}G(n,k+1)|\Delta_{k}(f)|\\\\
&\leq &\underbrace{2\sum\limits_{k=-\infty}^{n-1-\ell}G(n,k+1)[F+G]
+2\sum\limits_{k=n+\ell}^{\infty}G(n,k+1)[F+G]}_{=\mathcal{A}}\\\\
& &+ \underbrace{\sum\limits_{k=n-\ell}^{n-1}Ke^{-\alpha(n-k-1)}\Delta_{k}(g)}_{=\mathcal{B}_{1}}+\underbrace{\sum\limits_{k=n}^{n+\ell-1}Ke^{-\alpha(k+1-n)}\Delta_{k}(g)}_{=\mathcal{B}_{2}}\\\\
& &- \underbrace{\sum\limits_{k=n-\ell}^{n-1}Ke^{-\alpha(n-k-1)}\Delta_{k}(f)}_{=\mathcal{C}_{1}}-\underbrace{\sum\limits_{k=n}^{n+\ell-1}Ke^{-\alpha(k+1-n)} \Delta_{k}(f)}_{=\mathcal{C}_{2}}.\\\\
\end{array}
\end{displaymath}

The reader can deduce that
\begin{displaymath}
|\mathcal{A}|\leq \frac{2K(F+G)}{1-e^{-\alpha}}e^{-\ell \alpha}. 
\end{displaymath}

On the other hand, by using \textbf{(A2)} and Lemma \ref{gronwall-on}, we can deduce that
\begin{displaymath}
\begin{array}{rcl}
|\mathcal{B}_{1}|& \leq & \sum\limits_{k=n-\ell}^{n-1}Ke^{-\alpha(n-k-1)}r\Big\{||\chi(\cdot;(n,\xi))-\chi(\cdot;(n,\xi'))||_{\infty}+
|x_{k,n}(\xi)-x_{k,n}(\xi')|\Big\}\\\\
&\leq &   \sum\limits_{k=n-\ell}^{n-1}Ke^{-\alpha(n-k-1)}r\Big\{||\chi(\cdot;(n,\xi))-\chi(\cdot;(n,\xi'))||_{\infty}+
|\xi-\xi'|e^{(M+r)(n-1-k)}\Big\}\\\\
&\leq &   \sum\limits_{k=n-\ell}^{n-1}Ke^{-\alpha(n-k-1)}r\Big\{||\chi(\cdot;(n,\xi))-\chi(\cdot;(n,\xi'))||_{\infty}+
|\xi-\xi'|e^{(M+r)(\ell-1)}\Big\},\\\\
\end{array}
\end{displaymath}
where 
\begin{displaymath}
||\chi(\cdot;(n,\xi))-\chi(\cdot;(n,\xi'))||_{\infty}=\sup\limits_{j\in \mathbb{Z}}|\chi(j;(n,\xi))-\chi(j;(n,\xi'))|.
\end{displaymath}

Similarly, it follows that
\begin{displaymath}
\begin{array}{rcl}
|\mathcal{B}_{2}|
&\leq &   \sum\limits_{k=n}^{n+\ell-1}Ke^{-\alpha(k+1-n)}r\Big\{||\chi(\cdot;(n,\xi))-\chi(\cdot;(n,\xi'))||_{\infty}+
|\xi-\xi'|e^{(M+r)(\ell-2)}\Big\},\\\\
\end{array}
\end{displaymath}
which implies that
\begin{displaymath}
|\mathcal{B}_{1}|+|\mathcal{B}_{2}|\leq  \theta ||\chi(\cdot;(n,\xi))-\chi(\cdot;(n,\xi'))||_{\infty}+\theta|\xi-\xi'|e^{(M+r)\ell},
\end{displaymath}
where 
$$
\theta=\sum\limits_{k=-\infty}^{n-1}Kre^{-\alpha(n-k-1)}+\sum\limits_{k=n}^{\infty}Kre^{-\alpha(k+1-n)}=Kr\Big\{\frac{1+e^{-\alpha}}{1-e^{-\alpha}}\Big\}<1.
$$

The inequality
\begin{displaymath}
|\mathcal{C}_{1}|+|\mathcal{C}_{2}|\leq \theta|\xi-\xi'|e^{(M+r)\ell},
\end{displaymath}
can be deduce as above. Now, it follows that
\begin{displaymath}
|\chi(n;(n,\xi))-\chi(n;(n,\xi'))|\leq \frac{2K(F+G)}{(1-e^{-\alpha})(1-\theta)}e^{-\alpha \ell}+\frac{2\theta}{1-\theta}|\xi-\xi'|e^{(M+r)\ell} 
\end{displaymath}

Let us assume that $|\xi-\xi'|<1$ and let us choose  
$$
\ell =\frac{1}{\alpha}\ln\Big(\frac{1}{|\xi-\xi'|}\Big)
$$
and introduce the constants
$$
D_{1}=1+\frac{2K(F+G)}{(1-e^{-\alpha})(1-\theta)} \quad \textnormal{and} \quad
D_{2}=\frac{2\theta}{1-\theta}.
$$

Finally, a careful computation shows that
\begin{displaymath}
\begin{array}{rcl}
|h(n,\xi)-h(n,\xi')|&\leq & D_{1}|\xi-\xi'|+D_{2}|\xi-\xi'|^{1-(\frac{M+r}{\alpha})} \\\\
&\leq & (D_{1}+D_{2})|\xi-\xi|^{1-(\frac{M+r}{\alpha})}, 
\end{array}
\end{displaymath}
and the result follows. $\square$

\end{document}